\newcommand{\R}{\mathbb{R}}
\newcommand{\Z}{\mathbb{Z}}
\newcommand{\Sd}{\textup{sd}} 
\newcommand{\st}{\textup{st}} 
\newcommand{\ord}{\textup{Ord}}
\DeclareMathAlphabet{\mathcalligra}{T1}{calligra}{m}{n}
\DeclareFontFamily{U}{mathx}{\hyphenchar\font45}
\DeclareFontShape{U}{mathx}{m}{n}{
      <5> <6> <7> <8> <9> <10>
      <10.95> <12> <14.4> <17.28> <20.74> <24.88>
      mathx10
      }{}
\DeclareSymbolFont{mathx}{U}{mathx}{m}{n}
\DeclareMathAccent{\widecheck}{0}{mathx}{"71}
\newtheorem{theo}{Theorem}[section]
\newtheorem{lemma}[theo]{Lemma}
\newtheorem{prop}[theo]{Proposition}
\newtheorem{cor}[theo]{Corollary}
\newtheorem{rem}[theo]{Remark}
\newtheorem{ex}[theo]{Example}
\newtheorem{defi}[theo]{Definition}
\begin{document}
\title{Shellable tilings on relative simplicial complexes and their $h$-vectors\thanks{This work was relatively supported by the ANR project MICROLOCAL (ANR-15CE40-0007-01).}}
\author{Jean-Yves Welschinger}
\maketitle

\begin{abstract} 
\vspace{0.5cm}

An $h$-tiling on a finite simplicial complex is a partition of its geometric realization by maximal simplices deprived of several codimension one faces together with possibly their remaining face of highest codimension. In this last case, the tiles are said to be critical. An $h$-tiling thus induces a partitioning of its face poset by closed or semi-open intervals. We prove the existence of $h$-tilings on every finite simplicial complex after finitely many stellar subdivisions at maximal simplices. These tilings are moreover shellable. We also prove that the number of tiles of each type used by a tiling, encoded by its $h$-vector, is determined by the number of critical tiles of each index it uses, encoded by its critical vector. In the case of closed triangulated manifolds, these vectors satisfy some palindromic property. We finally study the behavior of tilings under any stellar subdivision. \\

{Keywords :  Simplicial complex, Shellable complex, Tilings, Stellar subdivision, Barycentric subdivision, Discrete Morse theory.}

\textsc{Mathematics subject classification 2020: }{55U10, 52C22, 57Q70.}
\end{abstract}

\section{Introduction}

A finite simplicial complex $K$ is classically said to be {\it shellable} when its maximal simplices $\sigma_1 , \dots , \sigma_N$ can be totally ordered in a such way that for every $p \in \{1 , \dots , N-1 \}$,
$\sigma_{p+1} \setminus (\sigma_1 \cup \dots \cup \sigma_p)$ is a non-empty union of codimension one faces of $\sigma_{p+1}$, see \cite{Bing,Bjo,Koz,Z}. It has been proved by H. Bruggesser and  P. Mani \cite{BruMan} that the boundary of any convex polytope is shellable. This led to the upper bound theorem by P. McMullen \cite{McMu}, up to which cyclic polytopes have maximal number of faces in each dimension, and to the complete characterization of face vectors of convex polytopes by L. J. Billera-C. W. Lee \cite{bilLee} and R. Stanley \cite{Stan}, the g-theorem, see also \cite{Z,Fult}. The property is strong though, a shellable closed triangulated manifold has to be piecewise-linear and homeomorphic to a sphere \cite{Koz}, and many triangulated spheres are not shellable even in dimension three \cite{Lick,HachZ}, even though, when piecewise-linear, they all become after finitely many barycentric subdivisions \cite{AdipBen2}. It has been relaxed in several ways, including collapsibility \cite{Whi,Lick,AdipBen,Tan,RS}, a combinatorial counterpart to contractibility, constructibility \cite{Stan75,Hach}, local constructibility \cite{DurJ,BenZ} or partitionability \cite{KleinSmi,Stanbook,Z}, but all these properties remain restrictive with respect to the underlying topology of the complex. We recently introduced a geometric counterpart to the latter, observing likewise \cite{SaWel1} that any shelling on $K$ provides in particular a tiling of its geometric realization by basic tiles which are maximal simplices, or {\it facets}, deprived of several codimension one faces, or {\it ridges}. Every product of a sphere with a torus of positive dimension carries such a tileable triangulation which cannot be shelled by Theorem $1.1$ of \cite{Wel1}.  A basic tile of {\it order} $k$, that is having been deprived of $k$ ridges, contains a unique open face of dimension $k-1$, its {\it restriction set} \cite{Stanbook,Z,BjoW}, and we define \cite{SaWel2} a {\it critical} tile of {\it index} $k$ to be the one obtained after removing also this peculiar face. The face structure of such a critical tile $C$ is thus a semi-open interval $]r(C),C]$, where $r(C)$ denotes the removed restriction set, see \S \ref{subsecgeompartition}. This terminology originates from their relation with the discrete Morse theory of R. Forman \cite{For1,For2} established in \cite{SaWel2} for more general {\it Morse tiles} which are closed simplices deprived of several ridges together with possibly a unique face of higher codimension, its {\it Morse face}. We now define an {\it $h$-tiling} of a finite simplicial complex $K$, or more generally of a relative simplicial complex, see \S \ref{subsectiling}, to be a partition of its geometric realization $\vert K \vert$ by either basic or critical tiles such that for every $d \geq 0$, the union of tiles of dimension $\geq d$ is closed in $\vert K \vert$. It thus induces a partition of the relative complex $K \setminus \{ \emptyset \}$ by closed or semi-open intervals, one for each facet of $K$, see \S \ref{subsecgeompartition}. It is called a {\it Morse tiling} when it uses Morse tiles instead, and these tilings are said to be {\it shellable} when the tiles $T_1 , \dots , T_N$ can be totally ordered in such a way that for every $p \in \{1 , \dots , N\}$,
$T_1 \cup \dots \cup T_p$ is closed in $\vert K \vert$. 

Our main results are the following tiling and existence theorems, Theorems \ref{theotiling} and \ref{theostellar}.
\begin{theo}
\label{theotiling}
Let $T$ be a Morse tile, $\mu \subset \overline{T}$ its Morse face and $\tau \subset \overline{T}$ any face of positive dimension. Then, if $\tau \not\subset \mu$ (resp. $\tau \subset \mu$), the stellar subdivision of $T$ at $\tau$ carries a Morse shelling using only regular basic tiles (resp. regular Morse tiles) together with a unique tile isomorphic to $T$. If $\emptyset \neq \mu \subsetneq \tau$, it also carries a Morse shelling using two critical tiles of consecutive indices $\dim (\mu)$ and $\dim (\mu) + 1$ together with regular basic tiles and $( \dim (\mu) - \ord (T))$ regular Morse tiles having $( \dim (\mu) - 1)$-dimensional Morse faces and order ranging from $\ord (T)$ to $\dim (\mu) - 1$.
\end{theo}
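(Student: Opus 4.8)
The plan is to argue entirely inside the subdivided simplex $\overline{T}$ and to track the partition at the level of the face poset. Write $\sigma=\overline{T}$ with vertex set $V$, let $\lambda=\lk(\tau,\sigma)$ be the complementary face and $\hat\tau$ the barycenter created by the subdivision, so that $\Sd_\tau(\sigma)=(\hat\tau*\partial\tau)*\lambda$ and the facets of $\Sd_\tau(\sigma)$ are exactly the simplices $\sigma_w=\hat\tau*(\sigma\setminus\{w\})$ for $w\in\tau$. The first step is to record, for every face $\gamma$ of $\Sd_\tau(\sigma)$, its support, i.e. the smallest original face of $\sigma$ containing it: if $\hat\tau\notin\gamma$ then $\gamma$ is an unchanged face of $\sigma$ with $\tau\not\subseteq\gamma$, whereas if $\hat\tau\in\gamma$ its support is $\tau\cup(\gamma\cap\lambda)$. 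Since $T$ is obtained from $\sigma$ by keeping the faces containing $r$ and deleting the single open face $\mu$, the region $|T|$ consists precisely of the faces of $\Sd_\tau(\sigma)$ whose support contains $r$ and differs from $\mu$. This dichotomy between the faces through $\hat\tau$ and those avoiding it is what separates the three cases, the decisive point being that $\mu$ is itself subdivided if and only if $\tau\subseteq\mu$.

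Next I would produce the shellings by ordering the vertices of $\tau$, say $w_0,\dots,w_t$, and shelling the facets in the order $\sigma_{w_0},\dots,\sigma_{w_t}$; for the full subdivision this is a shelling of a ball whose $j$-th piece is the basic tile with restriction $R_j=\{w_0,\dots,w_{j-1}\}\subseteq\tau$. The tile actually used by the Morse shelling is then the intersection of this interval $[R_j,\sigma_{w_j}]$ with the region, i.e. the faces of $[R_j,\sigma_{w_j}]$ whose support contains $r$ and is not $\mu$. The heart of the computation is to determine the type of this tile from the interleaving of $R_j$, $r$ and $\mu$: it stays \emph{basic} when the region conditions only trim faces already below the effective restriction $R_j\cup r$, it becomes \emph{critical} exactly when that effective restriction is itself deleted, and it becomes a regular \emph{Morse} tile when $\mu$ is deleted as an interior face. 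Because $r$ and (in the interesting case) $\mu$ lie in $\tau$, the link $\lambda$ only contributes a closed join factor, so I would first establish the three conclusions for $\sigma=\tau$ and then transport them by the join behavior of Morse shellings.

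For the two generic cases this is light. If $\tau\not\subseteq\mu$, choose $w_0\in\tau\setminus\mu$, so $\mu$ and $r$ are faces of $\sigma\setminus\{w_0\}\subseteq\sigma_{w_0}$; order the remaining facets so that $\sigma_{w_0}$ is placed last, making its newly added faces reproduce exactly the poset $[r,\sigma_{w_0}]\setminus\{\mu\}$ of a copy of $T$, while the earlier facets, untouched by $\mu$, are shelled as regular basic tiles. If $\tau\subseteq\mu$ the same last-facet carries a copy of $T$, but now $\mu\supseteq\tau$ is genuinely subdivided, so each remaining facet inherits a face of $\Sd_\tau(\mu)$ as a Morse face and becomes a regular Morse tile; here the classical shellability of $\Sd_\tau$ of a simplex, applied to the subdivided $\mu$, gives the required order. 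In the remaining case $\emptyset\neq\mu\subsetneq\tau$ the face $\mu$ is \emph{not} subdivided, and I would choose the vertex order adapted to the flag $r\subseteq\mu\subseteq\tau$: as $R_j\cup r$ climbs from $r$ up through $\mu$, each intermediate level yields one regular Morse tile whose Morse face is a fixed facet of $\mu$ (dimension $\dim\mu-1$), giving the graded family of orders $\ord(T),\dots,\dim\mu-1$, and precisely at the top of $\mu$ the deletion of $\mu$ together with the deletion of the face carrying $r$ forces the two critical tiles of consecutive indices $\dim\mu$ and $\dim\mu+1$, the barycenter $\hat\tau$ resolving the removed Morse face into a cancelling pair.

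The main obstacle is this last case: one must check that the adapted order really is a Morse shelling, i.e. that every initial union $T_1\cup\dots\cup T_p$ is closed in $|T|$, and that each graded piece is a single tile of exactly the stated type and not a spurious union of several intervals (as happens for careless orderings, where the region can delete a whole subfacet rather than a restriction). I expect the cleanest way to control both points simultaneously is to verify, by induction on $j$, that the faces newly covered at stage $j$ form the claimed interval, half-open interval, or interval-minus-one-face, and that their union with the previous stages is an order ideal for the face order of $\Sd_\tau(\sigma)$ restricted to $|T|$; the join reduction then upgrades this to general $\lambda$, and a bookkeeping of the restriction dimensions along the flag yields the exact indices, the $(\dim\mu-\ord(T))$ Morse tiles and the count of remaining regular basic tiles.
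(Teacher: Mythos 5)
Your overall architecture coincides with the paper's: you shell $\st_\tau(\sigma)$ by the facets $\hat\tau *(\sigma\setminus\{w\})$, $w\in\tau$, so that the $j$-th piece of the unsubdivided-region ball is the interval $[R_j,\sigma_{w_j}]$ with $R_j=\{w_0,\dots,w_{j-1}\}$, and you then trim by the relative structure of $T$; the paper does exactly this, writing the tiles as joins $T'_i=\hat\tau * T_i$ over a shelling $T_0\sqcup\dots\sqcup T_n$ of $\partial\sigma$. But two steps of your execution fail as stated. First, your reduction to the case $\sigma=\tau$ via a closed join factor rests on the claim that $r$ and $\mu$ lie in $\tau$; this holds only in the third case. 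In general one only has $r\subseteq\mu$ (the Morse face is a face of the underlying basic tile), and in the cases $\tau\not\subseteq\mu$ and $\tau\subseteq\mu$ the removed faces may meet the link $\lambda$, so the $\lambda$-factor is not a closed join factor and the region condition does not factor through the join. The paper avoids this by never reducing to $\tau$: it orders all the ridges $\sigma_0,\dots,\sigma_n$ of $\sigma$ globally, with $\tau\subset\sigma_i$ iff $i\notin\{k_1,\dots,k_2\}$. Second, and more seriously, in your first two cases you place the facet $\sigma_{w_0}$ carrying the copy of $T$ \emph{last}. The last tile of your shelling has restriction $R_t=\tau\setminus\{w_0\}$, hence order at least $\dim\tau$ after trimming, so it cannot be isomorphic to $T$ whenever $\dim\tau>\ord(T)$; already for $T$ a closed simplex ($\ord(T)=0$, $\mu=\emptyset$) and $\tau$ an edge, the copy of $T$ must be the \emph{first} tile. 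In the paper the copy sits mid-sequence, at step $k-1$ or $k$ where $k=\ord(T)$ (it is $T'_{k-1}\setminus T_{k-1}$, resp.\ $T'_k\setminus\mu$ after arranging $\mu\subset\sigma_k$), precisely where the accumulated restriction matches the order of $T$; likewise the faces with support $\mu$ land in that same mid-sequence tile, so your claim that the earlier facets are ``untouched by $\mu$'' would additionally require the facets $\sigma_w$ with $w\in\mu\cap\tau$ to come first, which your ordering does not impose.

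Finally, the case $\emptyset\neq\mu\subsetneq\tau$, which is the substance of the theorem, remains a program in your write-up: you never verify that the trimmed intervals are single tiles of the asserted types, only that you ``expect'' an induction to show it. The paper's proof at this point is a short explicit computation you would need to reproduce: with $l=\dim\mu$ and the ordering chosen so that $\mu\subset\sigma_j$ iff $j>l$, one checks that $\mu\cap T_j$ is a basic tile of dimension $l-1$ and order $j$ for $j\in\{k,\dots,l\}$, while $\mu\cap T_{l+1}$ is an open $l$-simplex; hence $T'_l\setminus(\mu\cap T_l)$ and $T'_{l+1}\setminus(\mu\cap T_{l+1})$ are critical of indices $l$ and $l+1$, and the tiles $T'_j\setminus(\mu\cap T_j)$, $j\in\{k,\dots,l-1\}$, are the $l-k$ regular Morse tiles with $(l-1)$-dimensional Morse faces and orders $k,\dots,l-1$. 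Your flag picture $r\subseteq\mu\subsetneq\tau$ is the right heuristic, but without this computation (or your promised induction actually carried out) the claimed indices, orders and counts are not established.
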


In Theorem \ref{theotiling}, $\overline{T}$ denotes the underlying simplex of $T$, see \S \ref{subsectiling}. We deduce that the $h$-tileability or shellability property gets preserved under stellar subdivision at any facet or ridge, see also Corollary \ref{cortiling} and Proposition \ref{propMorse}.
\begin{cor}
\label{cortilingintro}
The stellar subdivision at any facet and any ridge of an $h$-tiled or $h$-shelled (resp. Morse tiled or Morse shelled) finite relative simplicial complex inherits an $h$-tiling or $h$-shelling (resp. Morse tiling or Morse shelling) using the same number of critical tiles with same indices, while any Morse tiling (resp. Morse shelling) inherits an $h$-tiling (resp. $h$-shelling) after finitely many stellar subdivisions at facets. Moreover, the latter holds true using stellar subdivisions at ridges instead, or also using mixed ones.
\end{cor}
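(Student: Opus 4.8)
The plan is to derive everything from the local statement of Theorem \ref{theotiling} by exploiting that a stellar subdivision at a face $\tau$ only modifies the tiles $T$ whose underlying simplex $\overline{T}$ contains $\tau$, leaving every other tile, and the whole complementary subcomplex, untouched. Writing the given tiling as $\{T_1,\dots,T_N\}$, one tile per facet, I would first treat a subdivision at a single facet $\tau=\overline{T_i}$. Since $\tau$ is maximal it is a face of no other facet, so only $T_i$ is affected; and as the Morse face $\mu$ of any tile is a proper face of its simplex we have $\tau=\overline{T_i}\not\subset\mu$, whence the first alternative of Theorem \ref{theotiling} applies and retiles the subdivided simplex by regular basic tiles together with one tile isomorphic to $T_i$. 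Thus a critical $T_i$ is replaced by a single critical tile of the same index and everything else is basic, so both the tile type ($h$- or Morse) and the critical vector are preserved.

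For a subdivision at a ridge $\tau$ the affected tiles are the finitely many $T_i$ with $\overline{T_i}\supset\tau$, and they all share the same subdivided ridge. For each such tile I would select the applicable alternative of Theorem \ref{theotiling}: the first one whenever $\tau\not\subset\mu$, which is the generic situation and again yields one copy of $T_i$ plus regular basic tiles; and the second one in the remaining case $\tau\subset\mu$. Because $\tau$ has codimension one in $\overline{T_i}$ while the restriction set of a critical tile of index $k$ has dimension $k-1$, this last case can occur only when $\tau$ coincides with the Morse face of a critical tile whose restriction set is a ridge, and there the second alternative retiles by regular tiles plus one copy of $T_i$. In either situation exactly one critical tile, of unchanged index, survives per affected $T_i$, so the critical vector is again preserved; the point requiring care is that the tiles produced remain within the prescribed class, which for $h$-tilings must be checked in the boundary case $\tau\subset\mu$, where it follows from the precise form of the second alternative of Theorem \ref{theotiling} applied to a critical tile.

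The global assembly is the technical heart. I would refine the given order $T_1,\dots,T_N$ by substituting, in place of each affected $T_i$, the finite shelling of its subdivision furnished by Theorem \ref{theotiling}, keeping the unaffected tiles in their original positions. That the resulting sequence is again a shelling, that is, that every initial union is closed in $|\Sd_\tau K|$, respectively that the filtration by dimension stays closed for an $h$-tiling, would follow by combining the closedness of the local shellings inside each subdivided simplex with the closedness already available from the original shelling, using that the new tiles meet the rest of the complex only along the subdivided boundary of the star of $\tau$, where the local shellings of adjacent facets are compatible since they share the same subdivided $\tau$. I expect this interface bookkeeping, together with the verification that the filtration condition, and not merely full shellability, is inherited, to be the main obstacle.

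Finally, to turn a Morse tiling into an $h$-tiling I would use the third alternative of Theorem \ref{theotiling} as a reduction step. The only obstruction is the presence of non-critical Morse tiles, namely those whose Morse face $\mu$ properly contains the restriction set, so that $\dim(\mu)-\ord(T)+1>0$. To such a tile $T$, with $\emptyset\neq\mu\subsetneq\overline{T}$, I would apply a stellar subdivision at the facet $\tau=\overline{T}$: by Theorem \ref{theotiling} this replaces $T$ by two critical tiles and regular basic tiles together with non-critical Morse tiles all of whose Morse faces have dimension $\dim(\mu)-1$, each therefore of defect strictly smaller than the defect $\dim(\mu)-\ord(T)+1$ of $T$. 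Processing at each stage a tile of maximal defect, the pair (maximal defect, number of tiles attaining it) drops lexicographically, so after finitely many facet subdivisions every Morse tile is critical and an $h$-tiling is reached; as before each step is local because $\overline{T}$ is maximal. The same reduction runs with stellar subdivisions at ridges, applying the third alternative at a ridge $\tau\supsetneq\mu$ whenever $\mu$ has codimension at least two and resorting to a facet subdivision only when $\mu$ is already a ridge, which yields the ridge-only and mixed variants.
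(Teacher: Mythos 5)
Your proposal is correct and takes essentially the same route as the paper, whose proof consists precisely of the case $\tau\not\subset\mu$ of Theorem \ref{theotiling} (Corollary \ref{cortiling}) together with concatenation of the shelling orders for the first part, and, for the Morse-to-$h$ reduction, the induction of Proposition \ref{propMorse} on the third alternative of Theorem \ref{theotiling}, your lexicographic argument on the defect $\dim(\mu)-\ord(T)+1$ being an equivalent repackaging of the paper's step-by-step binomial bookkeeping, including the observation that a ridge subdivision retiles each adjacent tile by an isomorphic copy plus regular basic tiles, which forces the subdivisions to be performed one after another. The one point to tidy: by Definition \ref{defMorsetiles} a Morse face has codimension at least two in the underlying simplex (the index-$n$ critical tile being the open simplex, that is, a basic tile of order $n+1$ with empty Morse face), so for a ridge $\tau$ the boundary case $\tau\subset\mu$ that you single out is in fact vacuous and the first alternative always applies, which also disposes of your worry about regular Morse tiles leaking into an $h$-tiling there; for the same reason your fallback to a facet subdivision ``when $\mu$ is already a ridge'' never occurs, a ridge of the tile containing $\mu$ always existing since $\mu$ is not contained in the union of the removed ridges.
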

In fact, every finite simplicial complex, in particular every closed manifold or pseudo-manifold being piecewise linear or not, carries some shellable $h$-tiling after finitely many stellar subdivisions at facets or ridges.  

\begin{theo}
\label{theostellar}
Every finite relative simplicial complex becomes Morse shellable after a single barycentric subdivision and carries a shellable $h$-tiling after finitely many stellar subdivisions at facets.
Moreover, the same holds true using stellar subdivisions at ridges instead, or also using mixed ones.
Finally, in bounded dimension, both the sequence of subdivisions and the shelling are given by some quadratic time  algorithm.
\end{theo}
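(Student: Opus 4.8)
The statement has three ingredients --- that a single barycentric subdivision already yields Morse shellability, that a Morse shelling then upgrades to a shellable $h$-tiling, and the complexity bound --- and only the first is genuinely new. Indeed, once I know that $\Sd K$ carries a Morse shelling, Corollary \ref{cortilingintro} finishes the rest of the qualitative statement: it turns any Morse shelling into an $h$-shelling after finitely many stellar subdivisions at facets, using the same critical tiles, and it explicitly provides the variants with stellar subdivisions at ridges or with mixed ones. So the plan is to establish the base case for $\Sd K$ and then merely to keep track of sizes.

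For the base case I would work directly with the order-complex structure of $\Sd K$, whose maximal simplices are the maximal flags $\hat f_0 < \dots < \hat f_d$ of faces of $K$. I would fix a linear extension of the face poset of $K$ and order the maximal flags of $\Sd K$ lexicographically with respect to it; the claim is that the induced ordered partition of $\vert K \vert$ is a Morse shelling. Each flag, when added, either attaches to the previous union along a nonempty union of ridges --- contributing a basic tile whose restriction set is read off from the first place the flag departs from its predecessors --- or, at a face of $K$ where the topology prevents such an attachment, forces the removal of a single higher-codimension face, i.e.\ a critical tile. Through the correspondence of \cite{SaWel2}, this mechanism is the tiling counterpart of an acyclic matching on the faces of $K$ in the sense of Forman \cite{For1,For2}: the matched pairs give the regular basic and Morse tiles, while the critical faces of dimension $k$ give the critical tiles of index $k$.

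The crux, and the step I expect to fight with, is verifying that this ordered partition really is a Morse shelling. Because the barycentric subdivision of each closed simplex is shellable, a lexicographic order of this kind is available locally; the content of the base case is that these local orders reconcile into a global Morse shelling whatever the topology of $K$, which is exactly why a single subdivision suffices. Two things must be checked. First, that every piece is a genuine Morse tile, namely a facet of $\Sd K$ deprived of several ridges together with at most one face of higher codimension, rather than an uncontrolled region; here the flag structure and the acyclicity of the underlying matching are what prevent two distinct higher-codimension faces from being removed at once, and the relative setting, where some faces of $K$ are absent, is where I expect the bookkeeping to be most delicate. Second, that every partial union $T_1 \cup \dots \cup T_p$ is closed in $\vert K \vert$, which is the shelling condition and in particular yields the required closedness of the union of tiles of dimension $\geq d$; I would deduce it from the lexicographic order by checking that each newly added flag meets the previous union along a subcomplex.

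For the complexity statement in bounded dimension, I would note that fixing the dimension makes the number of faces of $\Sd K$ linear in the number of facets of $K$, so the linear extension, the flag ordering and the reading-off of the tiles all cost $O(1)$ per flag and hence a quadratic amount overall. To pass to the $h$-shelling, Theorem \ref{theotiling} resolves each Morse tile into basic and critical tiles by a number of stellar subdivisions bounded solely in terms of its dimension, hence by a constant here; summing over the tiles keeps both the number of subdivisions and the emission of the final shelling within the claimed quadratic bound.
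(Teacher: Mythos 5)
There is a genuine gap, in fact two. First, your base case is not proved. You correctly identify the crux---that the lexicographic order on maximal flags of $\Sd (K)$ with respect to a linear extension of the face poset yields a Morse shelling---but you offer only a heuristic for it, appealing to the ``acyclicity of the underlying matching'' of a matching you never construct. Nothing in the proposal shows that a flag's intersection with the union of the lex-earlier flags is a union of ridges together with \emph{at most one} face of higher codimension: a priori this intersection may have several maximal faces of codimension $\geq 2$, and no mechanism is given that rules this out for an arbitrary linear extension, nor for the relative setting where faces of $\Sd (L)$ must also be removed. Your parenthetical claim that closedness of the partial unions ``in particular yields'' the closedness of the union of tiles of dimension $\geq d$ is also false as stated: lex order by the minimal element of a flag does not place higher-dimensional tiles first in a non-pure complex, so the tiling condition of Definition \ref{deftiling} needs a separate argument. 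The paper sidesteps every global difficulty: it orders the facets of $\overline{S}$ by decreasing dimension (which settles the dimension condition), slices $S$ into relative simplices $P_j = \overline{S}_j \setminus (\overline{S}_{j-1} \cup L)$, and reduces everything to the purely local Proposition \ref{propbaryc}, namely that $\Sd$ of a single relative simplex is Morse shellable. That proposition is proved not by a flag-order argument but by taking the known shelling of $\Sd (T^n_k)$ by basic tiles (Theorem $4.2$ of \cite{SaWel1}) and computing that each tile $T_p$ meets the subdivided removed part $\Sd (\tau)$ in the single face $T_p \cap [\sigma^p_0, \dots , \sigma^p_{i_p}]$, whence $T_p \setminus \Sd(\tau)$ is a Morse tile; the global Morse shelling is then plain concatenation. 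This local computation is exactly the content your lexicographic scheme leaves unverified.

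Second, your reduction of the remaining clauses to Corollary \ref{cortilingintro} proves a weaker statement than the theorem. Your route produces a shellable $h$-tiling on a complex obtained from $K$ by one barycentric subdivision \emph{followed by} stellar subdivisions at facets. But a barycentric subdivision is a composition of stellar subdivisions at \emph{all} faces, including faces of codimension $\geq 2$ which are neither facets nor ridges, so it cannot be absorbed into the ``finitely many stellar subdivisions at facets (or ridges, or mixed)'' allowed by the statement---Remark \ref{remspectral} explicitly emphasizes this distinction between the two parts of the theorem. The paper instead proves Proposition \ref{proprelative}, which Morse-shells each relative simplex $P_j$ using stellar subdivisions at facets or ridges \emph{only} (with a termination measure on the total dimension of the higher-codimensional missing faces), and then converts Morse tiles into basic and critical ones via Proposition \ref{propMorse} and Theorem \ref{theotiling}; in the ridge case this requires handling the fact that each subdivision also cuts the adjacent tiles, which by Theorem \ref{theotiling} inherit a shelling with one isomorphic tile, forcing the subdivisions to be performed one at a time. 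Invoking Corollary \ref{cortilingintro} for the Morse-to-$h$ conversion is legitimate and not circular, but it cannot repair the barycentric prefix in your reduction. The quadratic complexity count is plausible in outline but moot until these two gaps are closed.
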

The complexity of the algorithm used to prove Theorem \ref{theostellar} is measured in terms of the size if the complex, given by the number of its facets.
In the case of a single non-basic regular Morse tile of index $k$ with $l$-dimensional Morse face for instance, the shellable $h$-tiling of Theorem \ref{theostellar} or Corollary \ref{cortilingintro} is obtained after $2^{l-k}$ stellar subdivisions and produces an $h$-shelling using $2^{l+1-k}$ critical tiles, see Proposition \ref{propMorse}.
This result is slightly refined in the case of closed triangulated pseudo-manifolds, see Theorem \ref{theostellarman} and \S \ref{subsecprelim} for a definition. Recall that D. E. Sanderson  \cite{Sand} likewise proved that any triangulated three-cell has a shellable subdivision and K. A. Adiprasito and B. Benedetti  \cite{AdipBen2} that every piecewise linear sphere becomes shellable after finitely many barycentric subdivisions. 

Every shellable Morse tiling on a finite simplicial complex encodes a class of compatible discrete Morse functions whose critical points are in one-to-one correspondence with the critical tiles of the tiling, preserving the index, see \cite{SaWel2} and Remark \ref{remspectral}, so that Theorem \ref{theostellar} provides a class of non-trivial discrete Morse functions on all finite simplicial complexes after finitely many stellar subdivisions at facets or ridges. 
Also, every Morse shelling of a finite relative simplicial complex given by Theorem \ref{theostellar} provides two spectral sequences which converge to its relative (co)homology and whose first pages are spanned by the critical tiles of the tiling, see \cite{Wel2}. Combined with Theorem \ref{theostellar}, they provide a way to compute the relative (co)homology of finite relative simplicial complexes using (co)chains complexes of much lower dimensions than the simplicial ones, as the discrete Morse complexes would do.

For every finite relative simplicial complex $K$, we denote by $s(K)$ the minimal number of stellar subdivisions at facets required for it to carry a shellable $h$-tiling, see Definition \ref{defsK}, so that $0 \leq s(K) < +\infty$ by Theorem \ref{theostellar}. What is the algorithmic complexity of deciding whether this stellar complexity vanishes? Recall that deciding collapsibility \cite{Tan} or shellability \cite{Goaoc} is NP-complete, while contractibility is undecidable \cite{VKF,Tan}.
When $s(K) = 0$, we likewise define $\mu (K)$ to be the minimal number of critical tiles of the shellable $h$-tilings on $K$, see Definition \ref{defmuK} and $\mu_\infty (K) \leq \mu (K)$ to be the infimum of these Morse numbers over all complexes obtained from $K$ after finitely many stellar subdivisions at facets, see Definition \ref{defmuinftyK}. They measure the lack of shellability in the classical sense.
\begin{prop}
\label{propmuK}
Let $K$ be a closed $n$-dimensional pseudo-manifold such that $s(K)=0$ and $n \geq 1$. Then, $\mu(K) \geq 2$ with equality if and only if $K$ is shellable. 
\end{prop}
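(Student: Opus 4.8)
The plan is to bound $\mu(K)$ from below by $2$ using the discrete Morse inequalities attached to any shellable $h$-tiling, and then to characterise the equality case through a two-way translation between minimal shellable $h$-tilings on $K$ and classical shellings of $K$. By Remark \ref{remspectral} (see also \cite{SaWel2}), any shellable $h$-tiling on $K$ encodes a discrete Morse function on $K$ whose critical cells are in index-preserving bijection with its critical tiles; writing $(c_0, \dots, c_n)$ for the critical vector, the discrete Morse inequalities give $c_k \geq \dim_{\mathbb{F}} H_k(|K|; \mathbb{F})$ for every field $\mathbb{F}$ and every $k$. I would take $\mathbb{F} = \Z/2$. Since $K$ is a closed pseudo-manifold it is strongly connected, hence connected, so $H_0(|K|; \Z/2) \neq 0$ and $c_0 \geq 1$; and the sum of all $n$-simplices is a mod-$2$ cycle that is not a boundary (there are no $(n+1)$-chains), so $H_n(|K|; \Z/2) \neq 0$ and $c_n \geq 1$. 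As $n \geq 1$ the indices $0$ and $n$ are distinct, whence $\mu(K) = \min \sum_k c_k \geq c_0 + c_n \geq 2$. (Alternatively, one may combine $c_0 \geq 1$ with the palindromic identity $c_k = c_{n-k}$ valid for closed pseudo-manifolds.)

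For the equality, I first treat the direction $\mu(K) = 2 \Rightarrow K$ shellable. Fix a shellable $h$-tiling realising $\mu(K) = 2$. The inequalities $c_0 \geq 1$ and $c_n \geq 1$ together with $\sum_k c_k = 2$ force $c_0 = c_n = 1$ and $c_k = 0$ for $0 < k < n$, so its critical vector is $(1, 0, \dots, 0, 1)$. Order the tiles $T_1, \dots, T_N$ so that every prefix union $U_p = T_1 \cup \dots \cup T_p$ is closed in $|K|$, and recall that each tile is supported on a single facet $\sigma_j$ and is the interval of faces $[r_j, \sigma_j]$ (basic) or $]r_j, \sigma_j]$ (critical). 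Since $T_j$ contains the interior $\mathrm{int}\,\sigma_j$, closedness of $U_p$ gives $\overline{\sigma_j} \subset U_p$ for all $j \leq p$, so that $U_p$ is exactly the subcomplex generated by $\sigma_1, \dots, \sigma_p$ and $T_p$ consists precisely of the faces of $\sigma_p$ that are new at stage $p$. As $U_1 = T_1$ is closed, $T_1$ must be a full closed simplex, i.e. have empty restriction set; since the critical vector admits a unique such tile, $T_1$ is the index-$0$ critical tile and every other tile has non-empty restriction set. Then for each $p \geq 2$ the old faces of $\sigma_p$ are exactly the faces not containing $r_p$ (not strictly containing $r_p$ in the critical case), that is a non-empty union of ridges of $\sigma_p$. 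Hence $\sigma_1, \dots, \sigma_N$ is a shelling and $K$ is shellable.

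Conversely, if $K$ is shellable then, being a shellable closed pseudo-manifold, it is a combinatorial $n$-sphere, so it carries a perfect discrete Morse function with exactly two critical cells, of indices $0$ and $n$. I would exhibit the matching $h$-tiling by reversing the previous construction: given a shelling $\sigma_1, \dots, \sigma_N$, let $T_j$ be the interval of faces of $\sigma_j$ new at stage $j$, turned critical exactly at the initial facet (index $0$) and at the terminal spanning facet (index $n$), all other tiles remaining regular basic. Its prefix unions are the subcomplexes generated by the initial facets, hence closed, and its critical vector is $(1, 0, \dots, 0, 1)$; thus it is a shellable $h$-tiling using exactly two critical tiles, giving $\mu(K) \leq 2$. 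Combined with the lower bound, this yields $\mu(K) = 2$.

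I expect the main obstacle to lie in this forward direction, namely in verifying that the upgraded tiling genuinely satisfies the full $h$-tiling closedness condition (the filtration by tiles of dimension $\geq d$ being closed) while using only the two homologically forced critical tiles; this is exactly where the hypothesis that $K$ is a sphere, and the placement of the spanning facet at the very end of the shelling, are used. The reverse direction is comparatively formal once one knows that prefix-closed unions are subcomplexes, the only delicate point being that the suppression of all intermediate critical tiles is precisely what forces each facet after the first to meet the previous ones along a non-empty union of ridges.
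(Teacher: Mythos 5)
Your proposal is correct in substance, and its two equality directions are essentially the paper's own argument: the translation between a shelled $h$-tiling with critical vector $(1,0,\dots,0,1)$ and a classical shelling (prefix unions are the subcomplexes generated by the first facets, each tile is the interval of new faces of its facet, a non-empty restriction set means the old part is a non-empty union of ridges) is exactly how the paper passes back and forth, only written out in more detail than the paper bothers to. Where you genuinely diverge is the lower bound. The paper never invokes homology or discrete Morse theory: it observes directly that in any shelled $h$-tiling of a closed pseudo-manifold of dimension $n\geq 1$, the first tile is closed in $\vert K\vert$, hence a closed simplex, that is the critical tile of index $0$, while the last tile is open in $\vert K\vert$ and, every ridge being shared by two facets, can contain no ridge at all; the open simplex being the only basic or critical tile without codimension-one faces, the last tile is critical of index $n$. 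This is lighter and self-contained, and it locates the two forced critical tiles at the two ends of the shelling. Your route through Remark \ref{remspectral}, the results of \cite{SaWel2} and Forman's Morse inequalities is valid and buys generality -- it would bound $\mu$ from below by any sum of mod-$2$ Betti numbers, not just $b_0+b_n$ -- but it imports machinery the paper deliberately avoids here.

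Three caveats. First, in this paper a closed pseudo-manifold is \emph{not} assumed strongly connected (that is a separate property in \S\ref{subsecprelim}), so your sentence ``it is strongly connected, hence connected'' is unwarranted; harmlessly so, since you only need $H_0\neq 0$, which holds for any non-empty complex. Second, your parenthetical alternative ``$c_k=c_{n-k}$ for closed pseudo-manifolds'' is established nowhere (Theorem \ref{theopalind} gives only a palindromic identity for a combined polynomial, and only for homology manifolds); drop it. Third, in the converse, the detour through combinatorial spheres and perfect discrete Morse functions is superfluous: the construction you then give from the shelling is all that is used, exactly as in the paper, and your worry about the dimension-$\geq d$ closedness is vacuous since $K$ is pure, so all tiles are $n$-dimensional. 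The one substantive point you flag -- that in a classical shelling of a closed pseudo-manifold the spanning (open) facet can only occur last, so that intermediate tiles are regular -- is real, and the paper itself asserts it without proof (``the intermediate tiles are regular''). It is true: intermediate tiles are not closed because classical shelling steps meet the previous facets non-trivially, and not open because, by the Danaraj--Klee argument showing shellable pseudo-manifolds are spheres, an intermediate spanning facet would close up a sub-pseudo-manifold, after which any later facet would place some ridge in three facets, contradicting the pseudo-manifold hypothesis. With the three cosmetic points repaired, your proof stands, and on the flagged point it is no less rigorous than the paper's own.
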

By the discrete Morse theory of R. Forman \cite{For1}, any discrete Morse function on such a closed pseudo-manifold has likewise two critical faces or more and exactly two only if the manifold is collapsible once deprived of a facet.
\begin{prop}
\label{propknotintro}
Let $K$ be a locally constructible triangulated three-sphere which contains a knotted triangle in its one-skeleton. Then, 
there exists a discrete Morse function on $K$ having only two critical points, while $\mu_\infty (K) \geq 4$.
\end{prop}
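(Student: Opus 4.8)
The plan is to prove the two assertions independently: the existence of a two-critical discrete Morse function rests solely on local constructibility, whereas the bound $\mu_\infty(K) \geq 4$ rests solely on the knotted triangle, the two hypotheses pulling in opposite directions. The whole interest of the statement, and the crux of the argument, is precisely that a single sphere can be flexible enough for the former and rigid enough for the latter.

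For the first assertion, I would invoke the characterisation of local constructibility for three-spheres due to Benedetti and Ziegler: such a sphere, deprived of one of its (open) facets $\Delta$, is collapsible. A collapse of the resulting three-ball $B = \vert K \vert \setminus \Delta$ onto a point is a discrete vector field whose unique critical cell is a vertex of index $0$; leaving $\Delta$ unmatched extends it to a discrete Morse function on all of $K$ whose only critical cells are that vertex and $\Delta$, of respective indices $0$ and $3$. This is the desired function with exactly two critical points. That the hypotheses are not vacuous follows from the same authors, who exhibit locally constructible three-spheres carrying genuinely knotted, e.g. trefoil, triangles.

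For the bound $\mu_\infty(K) \geq 4$, the key observation is that a stellar subdivision at a facet --- here a tetrahedron --- only inserts an interior vertex and cones over the boundary, so it leaves the one-skeleton untouched except for the new vertex and its incident edges, and in particular never subdivides an existing edge. Hence every complex $K'$ obtained from $K$ by finitely many stellar subdivisions at facets still carries the same three edges bounding a knotted triangle, with unchanged knot type in $\vert K' \vert \cong S^3$. By the theorem of Hachimori and Ziegler a triangulated three-sphere with a nontrivially knotted triangle is not shellable, so no such $K'$ is shellable; Proposition \ref{propmuK} then gives $\mu(K') > 2$ for every such $K'$ with $s(K') = 0$, which exist cofinally by Theorem \ref{theostellar}. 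To upgrade this to $\mu(K') \geq 4$, I would pass to the compatible discrete Morse function attached to a shellable $h$-tiling, whose critical cells correspond to its critical tiles index by index. Writing $c_i$ for the number of critical tiles of index $i$, the exact Euler relation $c_0 - c_1 + c_2 - c_3 = \chi(\vert K' \vert) = 0$ forces $c_0 + c_2 = c_1 + c_3$, so that the total number of critical tiles equals $2(c_1 + c_3)$ and is always even. Being even and different from $2$, it is at least $4$, whence $\mu(K') \geq 4$ for every admissible $K'$ and therefore $\mu_\infty(K) \geq 4$.

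The main obstacle is conceptual rather than computational, and lies in the robustness of the knot obstruction under precisely the operation that defines $\mu_\infty$. In contrast with barycentric subdivision, under which every piecewise linear sphere eventually becomes shellable --- so that the analogous infimum would collapse --- and the knot is progressively resolved, a stellar subdivision at a facet is blind to the one-skeleton and can never unknot the triangle. This is what makes the Hachimori--Ziegler obstruction permanent and forces $\mu_\infty(K) \geq 4$, while discrete Morse theory remains supple enough, through Benedetti--Ziegler, to realise only two critical points. The points requiring care are the accurate invocation of the Benedetti--Ziegler collapsibility result and the verification that the parity argument legitimately applies to the discrete Morse function produced by an \emph{arbitrary} shellable $h$-tiling.
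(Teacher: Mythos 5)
Your proposal is correct and follows essentially the same route as the paper's own proof: the Benedetti--Ziegler collapsibility of the sphere deprived of a facet yields the two-critical discrete Morse function, while the invariance of the knotted triangle under stellar subdivisions at facets, combined with the Hachimori--Ziegler obstruction to constructibility (hence shellability) and Proposition \ref{propmuK}, gives $\mu(\st^d(K)) > 2$ for all admissible $d$. Your parity upgrade to $\mu_\infty(K) \geq 4$ is precisely the paper's Lemma \ref{lemmamu}, which rests on the relation $\chi(K) = \sum_{k}(-1)^k c_k(\tau)$ from Lemma $2.5$ of \cite{SaWel2}; you merely re-derive it through the compatible discrete Morse function attached to a shellable $h$-tiling, a legitimate variant.
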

We refer to \cite{BenZ} for a definition of local constructibility and to Proposition \ref{propknot} for a slightly more precise result.
By Theorem $4.4$ of \cite{JosPfe}, it is strongly NP-complete to decide whether, given $c>0$, a simplicial complex carries a discrete Morse function with less than $c$ critical points. What about the complexity of computing $\mu(K)$ or $\mu_\infty (K)$, or of deciding, given $\mu >0$, whether $\mu(K)$ or $\mu_\infty (K)$ are less than $\mu$? We likewise define $m_\infty (K)$ to be the infimum over all $d>0$ and all Morse shellings on the $d$-th barycentric subdivision $\Sd^d (K)$ of $K$ of the number of critical tiles it uses, see Definition \ref{defm}. It is positive by Theorem \ref{theotiling} and equals two for all piecewise linear triangulated spheres by Theorem B of \cite{AdipBen2}, see also Proposition \ref{propm}. How does this number compare with the minimal number of critical points of a discrete Morse function on $\Sd^d (K)$, $d \gg 0$?

These tilings are also related to the classical theory of $h$-vectors, see \cite{Stan1,Stan2,Z}. Following \cite{SaWel1,Wel1}, we define the {\it $h$-vector} $h(\tau)=(h_0(\tau), \dots , h_{n+1}(\tau))$ of a Morse tiling $\tau$ on an $n$-dimensional finite relative simplicial complex to be the vector whose $j$-th entry is the number of tiles of order $j$ used by $\tau$ and its {\it critical} or {\it $c$-vector} 
$c(\tau)=(c_0(\tau), \dots , c_{n}(\tau))$ to be the vector whose $j$-th entry is the number of critical tiles of index $j$ used by $\tau$, see \S \ref{sechvectors}.
It turns out that, in the case of $h$-tilings, one vector determines the other by the following uniqueness result which provides a second advantage of $h$-tilings with respect to Morse ones, the first one being that much less isomorphism types of tiles get involved in the tilings.
\begin{theo}
\label{theohvector}
Let $\tau$ be an $h$-tiling on a pure $n$-dimensional relative simplicial complex $S$. Then,
$$\sum_{k=0}^{n+1} h_k (\tau) X^k(X+1)^{n+1-k} = X\sum_{k=0}^{n} f_k (S) X^k + \sum_{k=0}^{n-1} c_k (\tau) X^k.$$
In particular, two different $h$-tilings on $S$ have same $h$-vectors iff they have same $c$-vector.
\end{theo}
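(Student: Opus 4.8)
The plan is to prove the displayed polynomial identity by counting the faces of $S$ dimension by dimension, exploiting that an $h$-tiling partitions the nonempty faces of $S$ into closed intervals, one per basic tile, and semi-open intervals, one per critical tile. Since $S$ is pure $n$-dimensional, each facet $\sigma$ is an $n$-simplex, so that a basic tile of order $k$ has face-poset the closed interval $[r,\sigma]$ with $\dim r=k-1$, while a critical tile of index $k$ has face-poset the semi-open interval $]r,\sigma]$ whose removed Morse face $r$ has dimension $k-1$. All the information needed is thus the number of faces of each dimension inside such intervals.

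First I would record the elementary count: in a closed interval $[r,\sigma]$ with $\dim r=k-1$ the number of $d$-dimensional faces is $\binom{n+1-k}{d-k+1}$, whereas the semi-open interval $]r,\sigma]$ has the same count for every $d\ge k$ and exactly one fewer for $d=k-1$, its Morse face $r$ being the unique missing face. Writing $h_k$ for the number of order-$k$ tiles, basic or critical, and $c_k$ for the number of critical tiles of index $k$, and summing over all tiles of the partition, this yields for each $d\ge 0$
$$f_d(S)=\sum_{k} h_k\binom{n+1-k}{d-k+1}-c_{d+1},$$
where the correction $-c_{d+1}$ records the single Morse face of dimension $d$ absent from each critical tile of index $d+1$.

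Next I would settle the two degenerate extremes, which is where the relative-complex bookkeeping matters. On the one hand, every order-$0$ tile has face-poset $]\emptyset,\sigma]$: its would-be restriction set is the empty face, which is absent from the relative complex, so the interval is semi-open and the tile is critical of index $0$, whence $h_0=c_0$. On the other hand, a tile of index $n$ would have face-poset $]r,\sigma]=\{\sigma\}$ for a ridge $r$, but $\{\sigma\}=[\sigma,\sigma]$ is a closed interval, so such a tile is basic of order $n+1$ and not critical, whence $c_n=0$. With these in hand I would assemble the generating identity: expanding $(X+1)^{n+1-k}$ by the binomial theorem, the coefficient of $X^{m}$ in $\sum_k h_kX^k(X+1)^{n+1-k}$ is $\sum_k h_k\binom{n+1-k}{m-k}$, which by the face count above equals $f_{m-1}(S)+c_m$ for $1\le m\le n$, equals $h_0$ for $m=0$, and equals $\sum_k h_k=f_n(S)$ for $m=n+1$, the total number of tiles being the number of facets. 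These are precisely the coefficients of $X\sum_k f_k(S)X^k+\sum_{k=0}^{n-1}c_kX^k$: the match is immediate for $1\le m\le n-1$, uses $c_n=0$ at $m=n$, uses $h_0=c_0$ at $m=0$, and is automatic at $m=n+1$, which proves the identity.

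Finally, for the uniqueness statement I would rewrite the identity as $\sum_k h_kX^k(X+1)^{n+1-k}-\sum_{k=0}^{n-1}c_kX^k=X\sum_k f_k(S)X^k$, whose right-hand side depends only on $S$. Since the polynomials $X^k(X+1)^{n+1-k}$ for $0\le k\le n+1$ have pairwise distinct lowest-degree terms $X^k$ and are therefore linearly independent, the $h$-vector is recovered from the polynomial $\sum_k h_kX^k(X+1)^{n+1-k}$; as the difference of this polynomial and $\sum_{k=0}^{n-1}c_kX^k$ is fixed once $S$ is fixed, two $h$-tilings on $S$ share the former iff they share the latter, that is, they have the same $h$-vector iff they have the same $c$-vector. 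I expect the main difficulty to lie in the face count, namely in verifying that each critical tile differs from the corresponding basic tile by exactly its Morse face, so that the deficit aggregates cleanly into $-c_{d+1}$, together with the careful degenerate accounting $h_0=c_0$ and $c_n=0$ forced by the relative setting.
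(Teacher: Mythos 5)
Your derivation of the polynomial identity follows essentially the same route as the paper's proof: count the faces of $S$ dimension by dimension through the partition, using the face numbers of basic tiles from Lemma \ref{lemmaface} (your $\binom{n+1-k}{d-k+1}$ equals the paper's $\binom{n+1-k}{n-d}$), correct by the single $(k-1)$-dimensional face missing from each critical tile of index $k$, and match coefficients; the paper does the same, merely packaging the comparison as a generating-function computation as in Theorem $4.9$ of \cite{SaWel1}. Your endpoint analysis $h_0(\tau)=c_0(\tau)$ is correct and correctly justified: the empty face is absent from the relative complex, so every order-zero tile is the semi-open interval $]\emptyset,\sigma]$, hence critical of index zero, which is exactly what makes the constant terms agree (in the paper this is the equation $f_{-1}(S)=h_0(\tau)-\tilde c_0(\tau)=0$).

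However, your claim that $c_n(\tau)=0$ is false under the paper's conventions, and this is a genuine, if small, gap. The paper defines $C^n_n=T^n_{n+1}$: an open $n$-simplex is simultaneously a basic tile of order $n+1$ and a critical tile of index $n$, and Section \ref{sechvectors} sets $c_n(\tau)=h_{n+1}(\tau)$, which is typically positive; by Proposition \ref{propmuK}, every shelled $h$-tiling of a closed pseudo-manifold of positive dimension ends with such a tile, and $\chi(S)=\sum_{k=0}^n(-1)^k c_k(\tau)$ would fail with your convention. Your bookkeeping nevertheless stays arithmetically correct for the identity itself, because what is really used at the coefficient of $X^n$ is that index-$n$ critical tiles are tallied under $h_{n+1}$ and their face counts need no correction --- the paper encodes this by $\tilde c_n(\tau)=0$, not $c_n(\tau)=0$. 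Where the error bites is the ``in particular'' clause: the identity only yields that two $h$-tilings have equal $h$-vectors iff they have equal $(c_0,\dots,c_{n-1})$, and you dispose of the last entry by asserting it always vanishes. The missing step is one line: either observe that $c_n(\tau)=h_{n+1}(\tau)$ is read off the $h$-vector directly, or argue as the paper does, that $\chi(S)=\sum_{k=0}^n(-1)^k c_k(\tau)$ by Lemma $2.5$ of \cite{SaWel2} is an invariant of $S$, so that $c_n(\tau)$ is determined by $c_0(\tau),\dots,c_{n-1}(\tau)$. With that repair, and with your false intermediate claim removed, your argument is sound and coincides with the paper's.
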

In Theorem \ref{theohvector}, $f(S)=(f_0(S), \dots , f_{n}(S))$ denotes the {\it face vector} of $S$, whose $j$-th entry is the number of $j$-dimensional remaining faces of $S$, see \cite{CKS}. Moreover, $S$ is said to be {\it pure-dimensional} iff all its facets have same dimension. This result recovers Theorem $4.9$ of \cite{SaWel1} when the tiling $\tau$ involves only basic tiles, see also Proposition $2.3$ of \cite{Stanbook}. By Corollary $4.10$ of \cite{SaWel1}, if it moreover uses a unique closed simplex and tiles a simplicial complex $K$, then $h(\tau)$ coincides with the $h$-vector of $K$ \cite{Stan1,Z}, hence our choice of terminology.

Finally, in the case of closed triangulated manifolds, the classical Dehn-Sommerville relations \cite{Klain} provide another link between critical and $h$-vectors, namely.
\begin{theo}
\label{theopalind}
Let $K$ be an $n$-dimensional triangulated homology manifold equipped with an $h$-tiling $\tau$. Then, the polynomial 
$$\sum_{k=0}^{n+1} h_k (\tau) X^k + \sum_{k=2}^{n+1} c_{n+1-k} (\tau) (X-1)^k + \frac{1}{2} \chi(K)(1-X)^{n+1}$$ 
is palindromic.
\end{theo}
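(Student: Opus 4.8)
The plan is to reduce the claim to the classical Dehn--Sommerville relations by first relating the tiling $h$-polynomial $H_\tau(X) := \sum_{k=0}^{n+1} h_k(\tau) X^k$ to the classical $h$-polynomial $h(K,X) := \sum_{j=0}^{n+1} f_{j-1}(K)\, X^j (1-X)^{n+1-j}$ of $K$, where $f_{-1}(K)=1$ accounts for the empty face. Rewriting the left-hand side of Theorem \ref{theohvector} as $(X+1)^{n+1} H_\tau\!\left(\tfrac{X}{X+1}\right)$ and comparing it with the elementary substitution identity $(X+1)^{n+1} h\!\left(K,\tfrac{X}{X+1}\right) = 1 + X\sum_{k=0}^{n} f_k(K) X^k$, the face vector cancels out and I would obtain the key relation
$$H_\tau(X) = h(K,X) - (1-X)^{n+1} + \sum_{k=0}^{n-1} c_k(\tau)\, X^k (1-X)^{n+1-k}.$$
Here the term $-(1-X)^{n+1}$ records the single discrepancy between the absolute complex $K$ and the relative complex $S=K$ of Theorem \ref{theohvector}, namely the empty face.

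Next I would invoke the Dehn--Sommerville relations \cite{Klain}: since the links of the nonempty faces of a closed triangulated homology manifold are homology spheres, its classical $h$-vector satisfies $h_i - h_{n+1-i} = (-1)^i \binom{n+1}{i}\big(\chi(S^n) - \chi(K)\big)$, which in generating-function form reads
$$h(K,X) - X^{n+1} h(K,1/X) = \big(\chi(S^n) - \chi(K)\big)(1-X)^{n+1},$$
with $\chi(S^n) = 1 + (-1)^n$.

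Substituting the key relation into the polynomial $P(X)$ of the statement and reindexing the second critical sum by $j = n+1-k$ (so that $(X-1)^{n+1-j} = (-1)^{n+1-j}(1-X)^{n+1-j}$), the two $c$-sums merge and $P$ splits as $P = Q + R$ with
$$Q(X) = h(K,X) + \big(\tfrac12\chi(K) - 1\big)(1-X)^{n+1}, \qquad R(X) = \sum_{j=0}^{n-1} c_j(\tau)\,(1-X)^{n+1-j}\big(X^j + (-1)^{n+1-j}\big).$$
I would then check palindromicity, i.e. $X^{n+1} F(1/X) = F(X)$, of the two summands separately. Each term of $R$ is palindromic by the direct substitution $X^{n+1}(1-1/X)^{n+1-j} = (-1)^{n+1-j}(1-X)^{n+1-j}X^j$, which returns the term unchanged. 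For $Q$, the Dehn--Sommerville identity reduces palindromicity to the single scalar equation $\chi(K) - \chi(S^n) = \big(\tfrac12\chi(K)-1\big)\big(1+(-1)^n\big)$, which holds identically when $n$ is even and reduces to $\chi(K)=0$ when $n$ is odd, precisely the vanishing of the Euler characteristic of a closed odd-dimensional manifold. Hence $P = Q + R$ is palindromic.

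The main obstacle is bookkeeping rather than conceptual: pinning down the key relation exactly, in particular tracking the empty-face discrepancy between $K$ and $S$ that produces the $-(1-X)^{n+1}$ summand and then pairs with the $\tfrac12\chi(K)(1-X)^{n+1}$ term to let Dehn--Sommerville close up. A secondary point to verify with care is that the two regimes of $\chi$, for even and odd $n$, are both absorbed by the single constant $\tfrac12\chi(K)-1$, which is what forces the factor $\tfrac12$ and the exponent $n+1$ appearing in the statement.
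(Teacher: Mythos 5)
Your proposal is correct and takes essentially the same route as the paper: both combine Theorem \ref{theohvector} with the Dehn--Sommerville relations and the same change of variables (your passage between the $f$-polynomial and the classical $h$-polynomial via $X \mapsto X/(X+1)$ is exactly the paper's substitution $X = \frac{T}{1-T}$, and your form $h_i - h_{n+1-i} = (-1)^i\binom{n+1}{i}\big(\chi(S^n)-\chi(K)\big)$ is Macdonald's identity $R_K(-1-X) = (-1)^{n+1}R_K(X)$, which the paper invokes, rewritten in $h$-coordinates). Your split $P = Q + R$ with the termwise palindromicity check, and the scalar equation absorbing both parities of $n$ into the constant $\tfrac12\chi(K)-1$, is a cleaner write-up of the final verification the paper leaves implicit, but the argument is the same.
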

In Theorem \ref{theopalind}, a degree $n$ polynomial $P$ is said to be {\it palindromic} iff it satisfies the identity $X^nP(\frac{1}{X}) = P(X)$ and $\chi(K)$ denotes the Euler characteristic of $K$. The latter satisfies $\chi(K) =  \sum_{k=0}^{n} (-1)^k c_k (\tau)$ for every Morse tiling $\tau$ on $K$, see Lemma $2.5$ of \cite{SaWel2}. When the $h$-tiling uses only basic tiles or when $n\leq3$, Theorem \ref{theopalind} implies that the $h$-polynomial $\sum_{k=0}^{n+1} h_k (\tau) X^k$ is itself palindromic provided $c_0 (\tau) = c_n (\tau)$, as already observed in \cite{Wel1}. We finally  deduce the folllowing corollary.
\begin{cor}
\label{corhvector}
The $h$-vector of any $h$-tiling $\tau$ on an $n$-dimensional triangulated homology manifold satisfies the following identities. 
\begin{enumerate}
\item $\sum_{k=0}^{n+1} k(h_k (\tau) -h_{n+1-k} (\tau)) =0$.
\item $\sum_{k=0}^{n+1} k^2(h_k (\tau) -h_{n+1-k} (\tau)) =0$.
\item $\sum_{k=0}^{n+1} k^3(h_k (\tau) -h_{n+1-k} (\tau)) =6\big((n-1)c_{n-1} (\tau) - 2c_{n-2} (\tau)\big)$ if $n>2$.
\item $\sum_{k=0}^{n+1} k^4(h_k (\tau) -h_{n+1-k} (\tau)) =12(n+1)\big((n-1)c_{n-1} (\tau) - 2c_{n-2} (\tau)\big)$.
\end{enumerate}
\end{cor}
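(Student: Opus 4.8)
The plan is to read off the four identities directly from the palindromy of the polynomial $P$ supplied by Theorem \ref{theopalind}, by testing that symmetry against the power weights $k \mapsto k^m$ for $m=1,2,3,4$. Write $P = H + C + E$, where $H(X) = \sum_k h_k(\tau)X^k$ is the $h$-polynomial, $C(X) = \sum_{k=2}^{n+1} c_{n+1-k}(\tau)(X-1)^k$ is the critical correction, and $E(X) = \tfrac12\chi(K)(1-X)^{n+1}$ is the Euler term. Palindromy means that the coefficients $p_k$ of $P$ satisfy $p_k = p_{n+1-k}$, whence $\sum_k k^m(p_k - p_{n+1-k}) = 0$ for every $m$. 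Splitting $p_k$ according to $P=H+C+E$ and isolating the $h$-part, this reads
\[
\sum_{k} k^m\big(h_k(\tau) - h_{n+1-k}(\tau)\big) = D^m\big[(\widetilde C - C) + (\widetilde E - E)\big]\Big|_{X=1},
\]
where $D = X\frac{d}{dX}$ and $\widetilde F(X) := X^{n+1}F(1/X)$ is the degree $n+1$ reversal of $F$. Everything thus reduces to a few derivatives at $X=1$ of the reversals of the two correction terms.

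First I would compute those reversals explicitly. A direct expansion gives
\[
\widetilde C - C = \sum_{k=2}^{n+1} c_{n+1-k}(\tau)\,(X-1)^k\big[(-1)^k X^{n+1-k} - 1\big], \qquad \widetilde E - E = \tfrac12\chi(K)\big[1 - (-1)^{n+1}\big](X-1)^{n+1}.
\]
The crucial observation is the order of vanishing at $X=1$ of each summand. Since $(-1)^kX^{n+1-k}-1$ equals $(-1)^k - 1$ at $X=1$, the bracket vanishes there exactly when $k$ is even; hence the $k$-th summand of $\widetilde C - C$ vanishes to order $k$ for $k$ odd and to order $k+1$ for $k$ even, while $\widetilde E - E$ vanishes to order $n+1$ (and is identically zero when $n$ is odd). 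Passing to $t = \log X$ turns $D$ into $d/dt$ and $X=1$ into $t=0$, so $D^m[F]|_{X=1} = m!\,[t^m]F(e^t)$, and a summand contributes to the $m$-th identity only if its order of vanishing is at most $m$.

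Reading off these orders, the minimal one occurring in $\widetilde C - C$ is three, attained only at $k=2$ and $k=3$, and $\widetilde E - E$ has order $n+1$. Hence for $m=1$ and $m=2$ no summand survives and both right-hand sides vanish, which is identities (1) and (2). For $m=3$ and $m=4$ only the terms $k=2$ and $k=3$ of $\widetilde C - C$ can contribute, carrying the coefficients $c_{n-1}(\tau)$ and $c_{n-2}(\tau)$. It then remains to expand, with $X=e^t$,
\[
(X-1)^2\big(X^{n-1}-1\big) = (n-1)t^3 + \tfrac{(n-1)(n+1)}{2}t^4 + O(t^5), \quad -(X-1)^3\big(X^{n-2}+1\big) = -2t^3 - (n+1)t^4 + O(t^5),
\]
so that $D^3$ yields $6(n-1)$ and $-12$, and $D^4$ yields $12(n-1)(n+1)$ and $-24(n+1)$. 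Substituting gives $6\big((n-1)c_{n-1}(\tau) - 2c_{n-2}(\tau)\big)$ for $m=3$ and $12(n+1)\big((n-1)c_{n-1}(\tau)-2c_{n-2}(\tau)\big)$ for $m=4$, which are identities (3) and (4).

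The point requiring the most care is the Euler term $\widetilde E - E$, of order $n+1$. For identity (3) it contributes to $D^3$ precisely when $n \le 2$, which is exactly why the hypothesis $n>2$ is imposed there; for identity (4) it would contribute to $D^4$ only when $n \le 3$, but it is the zero polynomial for every odd $n$ (equivalently $\chi(K)=0$ in odd dimension) and has order $\ge 5$ for even $n \ge 4$, so the computation above delivers (4) untouched for all $n \ge 3$. The bookkeeping of these vanishing orders, together with the elementary Taylor expansions displayed above, is the whole content of the argument; apart from Theorem \ref{theopalind} no further input is needed, the only genuinely delicate step being the control of the $\chi$-term in low even dimension.
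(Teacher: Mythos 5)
Your proposal is correct and follows essentially the paper's own route: you derive exactly the paper's displayed identity $\sum_{k=0}^{n+1}\big(h_k(\tau)-h_{n+1-k}(\tau)\big)X^k=\sum_{k=2}^{n+1}c_{n+1-k}(\tau)(1-X)^k\big(X^{n+1-k}-(-1)^k\big)-\chi(K)(1-X)^{n+1}$ from Theorem \ref{theopalind} and extract the four weighted sums by differentiating at $X=1$, your operator $D=X\frac{d}{dX}$ with the substitution $X=e^t$ being merely a tidier repackaging of the paper's Leibniz-rule computation of the first four ordinary derivatives. One point where you are in fact more careful than the paper (whose proof only invokes $\chi(K)=0$ for odd $n$): your observation that the Euler term contributes to $D^4$ when $n=2$ is genuine --- it adds $36\chi(K)$ to the fourth weighted sum, and for instance the shelled boundary of the $3$-simplex has palindromic $h$-vector, so the left side of identity (4) vanishes while $12(n+1)\big((n-1)c_{n-1}(\tau)-2c_{n-2}(\tau)\big)=-72$ --- so identity (4) as stated actually needs $n\neq 2$ (or $\chi(K)=0$ in that dimension), which is precisely the restriction $n\geq 3$ your argument produces.
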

We introduce tilings and discuss their relations with partitionability in section \ref{secrelative}, after having recalled what we need from the theory of simplicial complexes. We prove Theorem \ref{theotiling} and Corollary \ref{cortilingintro} in section \ref{secsubdivision}, together with Theorem \ref{theostellar} in the special case of a single relative simplex and postpone the general case to section \ref{sectilings}. 
We then prove Propositions \ref{propmuK} and \ref{propknotintro} in section \ref{secobstr} and study the stellar complexity $s$ and minimal Morse number $\mu$. We finally introduce critical and $h$-vectors in section \ref{sechvectors} and prove Theorems \ref{theohvector} and \ref{theopalind} together with Corollary \ref{corhvector}.\\

{\bf Acknowledgements:} I am grateful to the referee for his/her valuable report and the many suggestions and references it contained. 

\section{Shellable tilings on relative simplicial complexes}
\label{secrelative}

\subsection{Preliminaries}
\label{subsecprelim}

Let us first recall what we need from the theory of simplicial complexes, see \cite{Koz,Munk}. From the combinatorial point of view, an {\it $n$-simplex} $\sigma$ is a set of cardinality $n+1$ whose elements are called the {\it vertices} of $\sigma$. Any subset of this finite set is called a {\it face}. Its {\it geometric realization} is the convex set
$\vert \sigma \vert = \{ \lambda : \sigma \to \R^+ \, \vert \, \sum_{v \in \sigma} \lambda (v) = 1 \}$, it spans the $n$-dimensional real affine space 
$A_{\sigma} = \{ \lambda : \sigma \to \R \, \vert \, \sum_{v \in \sigma} \lambda (v) = 1 \}$. Likewise, from the combinatorial point of view, a {\it finite simplicial complex} $K$ is a collection of subsets of a finite set $V_K$ which contains all singletons and all subsets of its elements. The elements of $K$ are simplices and any simplex defines itself a finite simplicial complex. The  {\it geometric realization} of a finite simplicial complex $K$ is the subset $\vert K \vert = \{ \lambda : V_K \to \R^+ \, \vert \, \sum_{v \in V_K} \lambda (v) = 1 \text{ and } \text{supp} (\lambda) \in K \}$ of $A_{V_K}$, where $\text{supp} (\lambda) = \{ v \in V_K \, \vert \, \lambda (v) \neq 0 \}$. This topological space is then covered by the geometric realizations of all the simplices of $K$ which are maximal with respect to the inclusion, called {\it facets}, and moreover, any two simplices intersect along a unique common face, possibly empty. A face which has codimension one in any facet containing it is called a  {\it ridge}.The dimension of a simplicial complex $K$  is the maximal dimension of its facets and when they all have same dimension, $K$ is said to be {\it pure dimensional}. It is then said to be {\it strongly connected} iff for any facets $\sigma \neq \sigma'$, there exists a sequence of facets $\sigma = \sigma_0, \sigma_1, \dots , \sigma_N = \sigma'$ such that for every $i \in \{ 1, \dots , N\}$, $\sigma_i$ and $\sigma_{i-1}$ intersect along a ridge. A pure dimensional finite simplicial complex whose ridges are faces of exactly two facets is called a {\it closed pseudo-manifold}. It is said to be a closed {\it triangulated manifold}
when it turns out to be homeomorphic to a closed topological manifold, and more generally a {\it triangulated homology $n$-manifold} when it is homeomorphic to some topological space $M$ whose local homology $H_* (M , M \setminus \{ x \} ; \Z)$ is isomorphic to the relative homology $H_* (\R^n , \R^n \setminus \{ 0 \} ; \Z)$ for every $x \in M$, see \cite{Munk}. These conditions are less restrictive than that of combinatorial or piecewise-linear manifolds which we do not use in this paper, see \cite{RS} and \S \ref{secobstr}. Note that any function from $V_K$ to some real affine space $E$ extends to an affine map $A_{V_K} \to E$ which restricts to $\vert K \vert$ and when this restriction is injective, it embeds $\vert K \vert$ into $E$. For example, the boundary of any convex simplicial polytope of $\R^n$ is the geometric realization of a triangulated sphere, embedded into $\R^n$.

The {\it first barycentric} subdivision $\Sd (K)$ of a finite simplicial complex $K$ is a collection of sets $\{ \sigma_0 , \dots , \sigma_q \}$ of elements of $K$ such that $\emptyset \neq \sigma_0 \subsetneq \sigma_1 \subsetneq \dots \subsetneq \sigma_q$, so that $V_{\Sd (K)} = K \setminus \{ \emptyset \}$. The map $\sigma \in K \setminus \{ \emptyset \} \mapsto \hat{\sigma} \in \vert K \vert \subset A_{\sigma}$, where $\hat{\sigma}$ denotes the barycenter of $\vert \sigma \vert$, defines by extension an homeomorphic embedding $\vert \Sd (K) \vert \to \vert K \vert$, see Proposition $2.33$ of \cite{Koz}. Likewise, the {\it stellar} subdivision $\st_K (\tau)$ of a finite simplicial complex $K$ at a simplex $\tau$ is the collection of subsets of $V_K \cup \{ {\tau} \}$ consisting of the simplices of $K$ that do not contain $\tau$ together with, for every $\sigma \in K$ which contains $\tau$, all cones with apex $\hat{\tau} $ over the faces of $\sigma$ not containing $\tau$. It is thus obtained by first deleting $\tau$ to $K$, that is removing to $K$ all simplices that contain $\tau$, and then by adding the cone with apex $\hat{\tau} $ over the boundary of the star of $\tau$, see \cite{Koz}. The map $V_K \cup \{ {\tau} \} \to \vert K \vert$ which maps $v \in V_K$ to its indicatrix and ${\tau} $ to the barycenter $\hat{\tau} $ of $\vert \tau \vert$ extends to an homeomorphic embedding $\vert \st_K (\tau) \vert \to  \vert K \vert \subset A_{V_K}$. Performing stellar subdivisions at all the non-empty faces of $K$, starting from the top dimensional ones and decreasing the dimension one by one, produces its barycentric subdivision, see for instance Proposition $2.23$ of \cite{Koz}. Finally, a subcomplex $L$ of a finite simplicial complex $K$ is an {\it elementary collapse} of $K$ iff $K \setminus L$ consists of a facet of $K$ together with one of its ridges not contained in any other facet of $K$ It is a {\it collapse} of $K$ if it is can be obtained from $K$ after a finite sequence of elementary collapses, see for instance \cite{AdipBen,For1}. 

\subsection{Relative simplicial complexes and their tilings}
\label{subsectiling}

We now recall the framework of relative simplicial complexes introduced by R. Stanley in \cite{Stan3}, see also \cite{Stanbook,CKS}.

\begin{defi}
\label{defrelativesimplex}
A relative simplex $P$ is a simplex $\overline{P}$ deprived of several of its proper faces $\tau_0, \dots , \tau_k$. A face of $P$ is a relative simplex $\tau \setminus (\tau_0 \cup \dots \cup \tau_k)$, where $\tau$ is a face of its underlying simplex $\overline{P}$ not contained in $\tau_0 \cup \dots \cup \tau_k$, and its dimension is the dimension of $\tau$. 
\end{defi}

The geometric realization of $P$ is the complement $\vert P \vert = \vert \overline{P} \vert \setminus (\vert  \tau_0 \vert  \cup \dots \cup \vert  \tau_k \vert )$, while from the combinatorial point of view, $\tau_0, \dots , \tau_k$ and their faces are no more faces of $P$. Special cases of relative simplices are of particular interest to us. First, simplices that have been deprived only of codimension one faces. There are $n+2$ such relative simplices in dimension $n$ up to isomophism and we denote by $T^n_k$ the one deprived of $k=\ord (T^n_k)$ ridges, so that $T^n_0$ is a closed simplex and $T^n_{n+1}$ an open one. The least dimension of a face of $T^n_k$ is $k-1$ and this face, called its {\it restriction set} $r(T^n_k)$, is unique, isomorphic to an open $(k-1)$-simplex, see Lemma \ref{lemmaface}. The second family of relative simplices of interest to us is then obtained by removing this peculiar $(k-1)$-dimensional face to $T^n_k$. We denote by $C^n_k$ the resulting relative simplex, $k \in \{0 , \dots , n \}$, so that $C^n_0$ is a closed simplex, or a closed simplex deprived of the empty set as a face, see \S \ref{subsecgeompartition}, and $C^n_n = T^n_{n+1}$ an open one, see \cite{SaWel2}. This leads us to the following definition.
\begin{defi}
\label{deftiles}
A basic tile of dimension $n$ and order $k$ is an $n$-simplex deprived of $k$ ridges. A critical tile of dimension $n$ and index $k$ is an $n$-dimensional basic tile of order $k$ deprived of its $(k-1)$-dimensional face.
\end{defi}
The tiles $(C^n_k)_{k \in \{0 , \dots , n \}}$ are thus the critical ones in dimension $n$ while the tiles $T^n_k$, $k \in \{1 , \dots , n \}$, are said to be {\it regular}, see \cite{SaWel1,SaWel2}. A third larger family of relative simplices appears to be of interest to us as well, namely the following ones which have been introduced in \cite{SaWel2}, see Remark \ref{remc32}.
\begin{defi}
\label{defMorsetiles}
A Morse tile of dimension $n$ and order $k$ is an $n$-simplex deprived of $k$ ridges together with possibly a unique face of higher codimension. 
\end{defi}
All Morse tiles which are not given by Definition \ref{deftiles} are regular, a terminology which originates from their connection with discrete Morse theory, see \cite{SaWel2} and Remark \ref{remspectral}.
We now recall the definition of relative simplicial complexes \cite{Stan3,Stanbook}.

\begin{defi}
\label{defrelativesimplicialcomplex}
A relative finite simplicial complex $S$ is a collection of relative simplices $\{ \sigma \setminus (\sigma \cap L) \, \vert \, \sigma \in K \}$, where $L$ is a subcomplex of a finite simplicial complex $K$. \end{defi}
Thus, if $P_1 \subset P \subset P_2$ are relative simplices such that $P_1, P_2 \in S$, then $P \in S$ as well. We may assume the subcomplex $L$ of $K$ not to contain any maximal simplex of $K$, deleting them from $K$ and $L$ otherwise.
We denote by $\overline{S}$ the collection of simplices $\overline{P}$ underlying the elements $P$ of $S$ together with their faces, and call with some abuse this subcomplex of $K$ the underlying simplicial complex of $S$, while $L \cap \overline{S}= \overline{S} \setminus S$ is the collection of faces of $\overline{S}$ not in $S$. The geometric realization of $S$ is the complement $\vert S \vert = \vert \overline{S} \vert \setminus \vert L \vert$. The product of a closed simplex with an open one, once triangulated, provides an example of relative simplicial complex, called a handle in \cite{SaWel2,Wel1}. 

We are mainly interested in the following geometric structure on relative simplicial complexes. 
\begin{defi}
\label{deftiling}
A tiling of a relative simplicial complex $S$ is a partition of its geometric realization by relative simplices such that for every $d \geq 0$, the union of relative simplices of dimensions greater than $d$ is closed in $\vert S \vert$. It is shellable iff it admits a filtration $\emptyset = S_0 \subset S_1 \subset \dots \subset S_N = S$, called a shelling, by closed subsets of $\vert S \vert$ such that for every $p \in \{1 , \dots , N\}$, $S_p \setminus S_{p-1}$ consists of a single relative simplex of the tiling. It is said to be an $h$-tiling (resp. a Morse tiling) iff all the relative simplices involved are given by Definition \ref{deftiles} (resp. Definition \ref{defMorsetiles}).
\end{defi}
The closedness condition in Definition \ref{deftiling} forces the closures of all the relative simplices involved in the tiling to be maximal with respect to the inclusion. 

\subsection{h-tilings as geometric partitionings}
\label{subsecgeompartition}

We now recall the classical notions of shellings and partitionings, see \cite{Stanbook,Z}, and discuss their relations with the $h$-tilings introduced in Definition \ref{deftiling}.
A pure dimensional finite simplicial complex $K$ is classically said to be {\it shellable} whenever its facets $\sigma_0, \dots, \sigma_N$ can be numbered in such a way that for every $i \in \{ 0, \dots , N-1 \}$, $K_i = \sigma_0 \cup \dots \cup \sigma_i$ is connected and $\sigma_{i+1} \setminus \sigma_i$ is a basic tile, see \cite{Stanbook,Z,BjoW}. Such a shelling provides a partitioning of the {\it face poset} of $K$, consisting of its simplices equipped with the partial order given by inclusion, by the closed interval $[\overline{r(T_i)}, \sigma_i]= \{ F \in K \, \vert \, \overline{r(T_i)} \subset F \subset \sigma_i \}$, where $T_0 = \sigma_0$ and $T_i = \sigma_i \setminus K_{i-1}$ if $i>0$. A pure dimensional finite simplicial complex $K$ is more generally said to be {\it partitionable} if and only if its face poset carries such a partitioning by closed intervals, see \cite{KleinSmi,Stanbook,Z}. Any complex which admits a convex embedding in some affine space is partitionable for instance \cite{KleinSmi,Stanbook}, while M. E. Rudin's triangulated tetrahedron is not shellable \cite{Rud}, see also \cite{Z}. Any partitioning uses a unique closed simplex $\sigma$, for the empty face of $K$ has to be contained is a unique closed interval $[\emptyset , \sigma]$. It induces a partitioning of the face poset of the relative simplicial complex $K \setminus \{ \emptyset \}$ by closed intervals together with the semi open one $]\emptyset , \sigma] = [\emptyset , \sigma] \setminus \{ \emptyset \}$. In fact, any open simplex $T$ in such a partitioning, which contributes as a singleton, is itself rather a semi-open interval $]\overline{r(T)}, \overline{T}]$, where $\overline{r(T)}$ denotes a ridge of $\overline{T}$. In the terminology of Definition \ref{deftiles}, these semi-open intervals are critical tiles of minimal and maximal indices, while the relative simplicial complex $K \setminus \{ \emptyset \}$ is a combinatorial counterpart to the geometric realization of $K$. An $h$-tiling in the sense of Definition \ref{deftiling} provides a partitioning of $K \setminus \{ \emptyset \}$ by either closed intervals $[\overline{r(T)}, \overline{T}]$ with ${r(T)} \subsetneq {T}$, the basic tiles, or semi-open intervals $]\overline{r(T)}, \overline{T}]$, the critical tiles, these tiles being in one-to-one correspondence with the facets of $K$. It is closely related to the discrete Morse theory of R. Forman \cite{For1,SaWel2}, see Remark \ref{remspectral}, and its existence is much less restrictive than classical partitionnings by Theorem \ref{theostellar}. 
An intermediate notion, already less restrictive, is the existence of $h$-tilings using only closed or open simplices as critical tiles, for it may use either none or several closed simplices. Figure \ref{figexotic} provides an example of such an $h$-tiling on the boundary of the two-simplex which is not a partitioning in the sense of \cite{Stanbook}. Figure \ref{figcylinder} provides an example of $h$-tiling on the annulus, which once capped with two open simplices provides an $h$-tiling of a triangulated two-sphere without any closed simplex. Depriving the annulus of its boundary and capping it with two closed simplices provides another $h$-tiling of the same triangulated two-sphere without open simplex. 

 \begin{figure}[h]
   \begin{center}
    \includegraphics[scale=1]{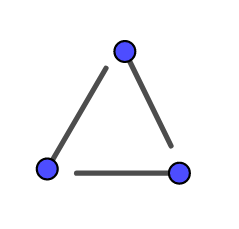}
    \caption{An $h$-tiling of the triangle.}
    \label{figexotic}
      \end{center}
 \end{figure}

\begin{figure}[h]
   \begin{center}
    \includegraphics[scale=1]{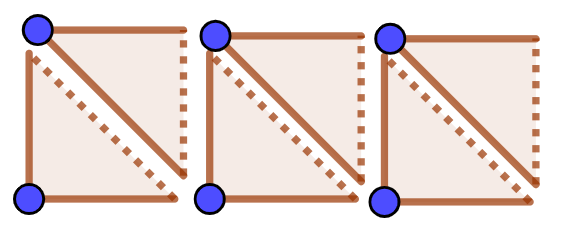}
    \caption{An $h$-tiling of the annulus.}
    \label{figcylinder}
      \end{center}
 \end{figure}

Triangulating the product of several copies of the tiling given by Figure \ref{figexotic} with the shelled boundary of a simplex, we proved in \cite{Wel1} that any product of a sphere with a torus carries an $h$-tiled triangulation using only open and closed simplices as critical tiles, in fact using none of them as soon as the torus has positive dimension. The latter, as the $h$-tiling given by Figure \ref{figexotic}, are not shellable. In fact, every tiling supports a quiver which is acyclic if and only if the tiling is shellable, see Theorem $1.1$ of \cite{Wel2}. Let us finally observe that the simplicial complex built out of two simplices sharing a non-empty common face of codimension greater than one carries no $h$-tiling, see Figure \ref{figstellar}, Example \ref{exsK} and Proposition \ref{propm}.

\section{Subdivisions of a relative simplex}
\label{secsubdivision}

\subsection{Proof of the tiling theorem}
\label{subsectilingtheorem}

Let us first prove Theorem \ref{theotiling}, which  recovers after successive applications the tiling theorems under barycentric subdivisions obtained in \cite{SaWel1,SaWel2}. 

\begin{proof}[Proof of Theorem \ref{theotiling}]
Let us denote by $\sigma = \overline{T}$ the underlying simplex of $T$ and number its ridges $\sigma_0, \dots, \sigma_n$ in such a way that $T=\sigma \setminus  (\sigma_0 \cup \dots \cup \sigma_{k-1} \cup \mu)$, $k$ being the order of $T$. This ordering induces a shelling of $\partial \sigma$ and we set $T_0 = \sigma_0$ and for every $i \in \{1, \dots , n \}$, $T_i = \sigma_i \setminus (\sigma_0 \cup \dots \cup \sigma_{i-1})$ the corresponding basic tiles. We may moreover choose this ordering in such a way that there exist integers $k_1 \leq k_2$ such that $0 \leq k_1 \leq k$, $k-1 \leq k_2 \leq n$ and for every $i \in \{0, \dots , n \}$, $\tau$ is contained in $\sigma_i$ if and only if $i \notin \{ k_1, \dots , k_2\}$, so that $k_2-k_1$ is the dimension of $\tau$. The stellar subdivision of $\sigma$ at $\tau$ is then shelled by the cones with apex $\hat{\tau}$ over $\sigma_{k_1}, \dots , \sigma_{k_2}$, so that $\st_\tau (\sigma \setminus (\sigma_{0} \cup \dots \cup \sigma_{k_1 -1})) = T'_{k_1} \sqcup \dots \sqcup T'_{k_2}$, where for every  $i \in \{0, \dots , n \}$, the join $T'_i = \hat{\tau} * T_i$ is a basic tile of order $i$. We deduce that $\st_\tau (\sigma \setminus (\sigma_{0} \cup \dots \cup \sigma_{k -1}))$ inherits the shelling $(T'_{k_1} \setminus T_{k_1}) \sqcup \dots \sqcup (T'_{k-1} \setminus T_{k-1}) \sqcup \dots \sqcup T'_{k_2}$. This implies the result if $\mu$ is empty, that is if $T$ is a basic tile, for if $k_1 < k$ (resp. $k_1=k$, so that $k<n$ since $\dim (\tau) >0$), the tile $T'_{k-1} \setminus T_{k-1}$ (resp. $T'_{k}$) is isomorphic to $T$ while the other ones are regular basic tiles.

If $\mu \neq \emptyset$ does not contain $\tau$, we may choose our numbering of the ridges in such a way that $\mu  \subset \sigma_k$, since $\mu \not\subset \sigma_{0} \cup \dots \cup \sigma_{k -1}$. In this case, removing $\mu$ from the previous shelling, $\st_\tau (T)$ inherits the Morse shelling $(T'_{k_1} \setminus T_{k_1}) \sqcup \dots \sqcup (T'_{k-1} \setminus T_{k-1}) \sqcup (T'_k \setminus \mu) \sqcup T'_{k+1} \sqcup \dots \sqcup T'_{k_2}$. The Morse tile $T'_k \setminus \mu$ is of order $k$ and thus isomorphic to $T$, while the other ones are regular basic tiles, for $k$ has to be less than $n$ and $k+1$ positive. 
In the special case where $\tau$ strictly contains $\mu$ and $T$ is regular, so that $0=k_1 \leq  l < k_2 \leq n$, we may assume that $\mu$ is contained in $\sigma_j$ iff $j >l = \dim (\mu)$. Then, $\st_\tau (T)$ inherits the Morse shelling $(T'_{0} \setminus T_{0}) \sqcup \dots \sqcup (T'_{k-1} \setminus T_{k-1}) \sqcup (T'_k \setminus (\mu \cap T_k)) \sqcup \dots \sqcup (T'_l \setminus (\mu \cap T_l)) \sqcup (T'_{l+1} \setminus (\mu \cap T_{l+1})) \sqcup T'_{l+2} \sqcup \dots \sqcup T'_{k_2}$. For every $j \in \{ k , \dots , l \}$, $\mu \cap T_l$ is a basic tile of dimension $l-1$ and order $j$, while $\mu \cap T_{l+1}$ is an open $l$-simplex. The tiles $T'_l \setminus (\mu \cap T_l)$ and $T'_{l+1} \setminus (\mu \cap T_{l+1})$ are thus critical of indices $l$ and $l+1$, while the $l-k$ tiles $T'_j \setminus (\mu \cap T_j)$, $j \in \{ k , \dots , l -1 \}$, are regular Morse tiles with $(l-1)$-dimensional Morse faces and orders ranging from $k$ to $l-1$ . 
Finally, if $\mu$ contains $\tau$, then as before $\st_\tau (\mu)$ is shelled by the cones with apex $\hat{\tau}$ over $(\sigma_{k_1} \cap \mu), \dots , (\sigma_{k_2} \cap \mu)$, so that $\st_\tau (\mu \setminus (\sigma_{0} \cup \dots \cup \sigma_{k_1 -1})) = \mu_{k_1} \sqcup \dots \sqcup \mu_{k_2}$, where for every  $i \in \{k_1, \dots , k_2 \}$, $\mu_i$ is a basic tile of order $i$. From what preceds, we deduce that $\st_\tau (T)$ inherits the Morse shelling $\big(T'_{k_1} \setminus (T_{k_1} \cup  \mu_{k_1})\big) \sqcup \dots \sqcup \big(T'_{k-1} \setminus (T_{k-1} \cup  \mu_{k-1})\big) \sqcup (T'_k \setminus \mu_k) \sqcup  \dots \sqcup (T'_{k_2} \setminus \mu_{k_2})$. As before, if $k_1 < k$ (resp. $k_1=k$), the tile $T'_{k-1} \setminus (T_{k-1} \cup \mu_{k-1})$ (resp. $T'_{k} \setminus \mu_{k}$) is isomorphic to $T$, while the other ones are Morse regular. Indeed, for every $i \in \{k_1, \dots , k_2 \}$, $\dim (\mu_i) = 1 + \dim (\mu \cap \sigma_i) \geq \ord (T_i) =  \ord (T'_i)$. Hence the result.
\end{proof}

As a special case of Theorem \ref{theotiling}, we get the following corollary.
\begin{cor}
\label{cortiling}
Let $T$ be a basic or critical tile and $\tau$ any face of its underlying simplex not contained in its Morse face. Then, the stellar subdivision of $T$ at $\tau$ carries a shellable $h$-tiling using a critical tile if and only if $T$ is critical and this tile is then unique of the same index as $T$. 
\end{cor}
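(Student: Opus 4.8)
The plan is to read the corollary off directly from the first alternative of Theorem~\ref{theotiling}, the one governed by the hypothesis $\tau \not\subset \mu$, after recording what the Morse face $\mu$ of a basic or critical tile is. First I would dispose of the degenerate situation: a stellar subdivision at a vertex is the identity, so there is nothing to prove when $\dim \tau = 0$, and I may assume $\tau$ has positive dimension, matching the hypothesis of Theorem~\ref{theotiling}. I would then identify $\mu$ in the two cases of interest. A basic tile carries no extra removed face, so $\mu = \emptyset$ and $\tau \not\subset \mu$ holds automatically. A critical tile $C^n_k$ of index $k$ is, by Definition~\ref{deftiles}, the basic tile $T^n_k$ of order $k$ deprived of its restriction set $r(T^n_k)$, whose dimension is $k-1$; thus its Morse face is $\mu = r(T^n_k)$ with $\dim \mu = k-1 = \ord(T) - 1$. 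In either case the hypothesis $\tau \not\subset \mu$ of the corollary places us squarely in the first alternative of Theorem~\ref{theotiling}.

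Applying that alternative, $\st_\tau(T)$ inherits a shellable Morse tiling using only regular basic tiles together with a single tile isomorphic to $T$. Since $T$ is itself basic or critical, \emph{every} tile occurring in this shelling is basic or critical in the sense of Definition~\ref{deftiles}, so the Morse shelling is in fact a shellable $h$-tiling. This already produces the required tiling, and it remains only to count its critical tiles.

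The key observation is that regular basic tiles are never critical: in dimension $n$ the critical tiles are exactly the $C^n_k$, $k \in \{0,\dots,n\}$, whereas the regular basic tiles are the $T^n_k$, $k \in \{1,\dots,n\}$, and these two families are disjoint. Hence the only tile of the shelling that can be critical is the unique copy of $T$, so the $h$-tiling uses a critical tile if and only if $T$ is critical; in that case that copy is its unique critical tile and, being isomorphic to $T$, shares its index. I would close by noting that this also covers the borderline closed and open simplices, once they are regarded as the critical tiles $C^n_0$ and $C^n_n$.

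I expect the only genuinely delicate point to be the bookkeeping of the Morse face. One must verify that for a critical tile $\dim \mu = \ord(T) - 1$ is strictly smaller than $\ord(T)$, so that the third alternative of Theorem~\ref{theotiling}, which produces two critical tiles but demands $\dim \mu \geq \ord(T)$ through its $(\dim(\mu) - \ord(T))$ regular Morse tiles, simply cannot apply here. This is precisely what guarantees that the first alternative is the \emph{only} one in play, and hence that exactly one critical tile, of the prescribed index, can appear.
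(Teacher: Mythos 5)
Your proof is correct and takes exactly the paper's route: the paper disposes of Corollary~\ref{cortiling} in one line, ``This follows from the case $\tau \not\subset \mu$ of Theorem~\ref{theotiling}.'' Your elaborations --- that $\mu = \emptyset$ for a basic tile and $\mu = r(T^n_k)$ of dimension $\ord(T)-1$ for a critical one, that regular basic tiles are never critical so the unique copy of $T$ is the only possible critical tile, and that $\dim\mu = \ord(T)-1 < \ord(T)$ rules out the third alternative of the theorem --- are precisely the details the paper leaves implicit.
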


\begin{proof}
This follows from the case $\tau \not\subset \mu$ of Theorem \ref{theotiling}. 
\end{proof}

\begin{rem}
\label{remc32}
However, if the Morse face contains $\tau$, Corollary \ref{cortiling} fails to be true and the stellar subdivision of a critical tile of intermediate index may not be $h$-tileable, though it is Morse tileable by Theorem \ref{theotiling}. The codimension of $\tau$ in $\sigma$ has to be greater than one and the first example is the critical tile of dimension three and index two, which is a three-simplex deprived of two ridges and one edge. Its stellar subdivision at the edge is shelled by one basic tile of order one deprived of an edge and one critical tile of index two, but it carries no $h$-tiling. Its barycentric subdivision does not seem to be $h$-tileable as well, as observed in Remark $2.20$ of \cite{SaWel2}.
\end{rem}

\subsection{Proof of Theorem \ref{theostellar} in the case of relative simplices}

\begin{prop}
\label{propbaryc}
The first barycentric subdivision of every relative $n$-simplex $P$ carries a shellable Morse tiling. Moreover, the latter uses a Morse tile of order zero (resp. of order $n+1$) iff $P$ has not been deprived of any ridge (resp. has been deprived of all its ridges) and this tile is then unique.
\end{prop}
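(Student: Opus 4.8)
The plan is to prove this by induction on the dimension $n$, using the fact that barycentric subdivision can be performed via successive stellar subdivisions starting from the top-dimensional faces. Recall that $\Sd(P)$ is obtained by stellar subdividing at all non-empty faces of $\overline{P}$, decreasing dimension one step at a time. The natural strategy is therefore to decompose $\Sd(P)$ along the central barycenter $\hat{\sigma}$ of $\sigma = \overline{P}$: the star of $\hat{\sigma}$ in $\Sd(\sigma)$ is the cone $\hat{\sigma} * \Sd(\partial \sigma)$, and $\Sd(\partial\sigma)$ is itself the barycentric subdivision of the boundary of a closed simplex, hence carries a shelling by the Bruggesser–Mani theorem together with the inductive tiling structure on each facet.

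First I would set up the induction, treating the closed simplex $C^n_0$ and the open simplex $C^n_n = T^n_{n+1}$ as base anchors so that the ``order zero iff no ridge removed'' and ``order $n+1$ iff all ridges removed'' bookkeeping is transparent. For the inductive step I would write $P = \sigma \setminus (\tau_0 \cup \dots \cup \tau_m)$ and organize the removed faces by whether they lie in the shelling-order of $\partial\sigma$; the key point is that each facet (ridge) $\sigma_i$ of $\sigma$ inherits a relative simplex structure $\sigma_i \cap P$, which is again a relative simplex of dimension $n-1$, so by induction $\Sd(\sigma_i \cap P)$ carries a shellable Morse tiling. Coning these off with apex $\hat{\sigma}$ and concatenating them in a Bruggesser–Mani shelling order of $\partial\sigma$ produces a Morse tiling of $\Sd(P)$: each basic or Morse tile $T$ on the subdivided boundary gives a cone $\hat{\sigma} * T$ of one higher order, and the closedness condition of Definition \ref{deftiling} is preserved because we are coning a closed filtration over a single new apex. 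I would verify that the resulting stratification is a shelling by checking that each partial union remains closed, which reduces to the corresponding property on $\partial\sigma$.

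The order-zero clause is the cleanest: a Morse tile of order zero in $\Sd(P)$ must be a closed simplex of $\Sd(P)$ whose closure meets $\vert P \vert$ in full, and tracking which simplices of $\Sd(\sigma)$ remain undeprived forces $P$ to have retained all its ridges (equivalently $P = C^n_0$), with uniqueness coming from the fact that the unique such tile must be the one carrying the global restriction set at the initial step of the shelling. Dually, an order-$(n+1)$ tile is an open $n$-simplex of $\Sd(P)$, and its presence forces every ridge of $\sigma$ to have been removed, i.e. $P = T^n_{n+1}$, again uniquely.

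The main obstacle I anticipate is the careful combinatorial bookkeeping of exactly which faces the coning operation removes, so that each cone $\hat{\sigma} * T$ is genuinely a \emph{Morse} tile in the sense of Definition \ref{defMorsetiles} — that is, verifying it is deprived of several ridges \emph{together with at most one face of higher codimension}, and that no tile accidentally acquires two independent higher-codimension deprived faces. This is precisely where the hypothesis that $P$ is a single relative \emph{simplex} (rather than a general relative complex) is essential, since it guarantees the removed faces $\tau_0,\dots,\tau_m$ sit inside one ambient simplex and interact predictably under the stellar subdivisions; reconciling the coning with the inductive Morse-face structure, and showing the Morse-face condition is never violated, is the delicate step. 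The ``iff'' directions of the order statements will also require showing these extreme tiles cannot arise from any other configuration, which I would handle by a direct dimension-count on the deprived faces.
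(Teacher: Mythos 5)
There is a genuine gap, and it sits exactly at the point you flag as ``delicate'' but leave unargued. First, a structural problem with your partition: you propose to tile $\Sd(P)$ by cones $\hat{\sigma} * T$ where $T$ runs over tiles of $\Sd(\sigma_i \cap P)$, i.e.\ over the \emph{deprived} boundary pieces. These cones do not cover $\vert \Sd(P) \vert$: if $x$ lies in a removed face $\tau_j \subset \partial\sigma$, the open segment from $\hat{\sigma}$ to $x$ consists of interior points of $\vert \sigma \vert$, hence belongs to $\vert P \vert$, yet it lies over no point of any $\sigma_i \cap P$. (Concretely, for $P = \sigma \setminus v$ with $v$ a vertex, the open edge of $\Sd(\sigma)$ joining $\hat{\sigma}$ to $v$ is in no proposed tile.) To fix this you must cone over a tiling of the \emph{full} $\Sd(\partial\sigma)$ and only afterwards delete $\Sd(\tau_0 \cup \dots \cup \tau_m)$ from each cone tile. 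Relatedly, your order bookkeeping is off: coning preserves the order rather than raising it by one, since the base face of the cone is retained and only the cones over removed ridges become missing ridges (compare the proof of Proposition \ref{proprelative}, where the cone over a basic tile of order $j$ is again of order $j$); if coning raised the order, no order-zero tile could ever appear and the apex $\hat{\sigma}$ would be uncovered. A smaller slip: retaining all ridges is \emph{not} equivalent to $P = C^n_0$, as $P$ may still be deprived of faces of codimension greater than one --- this is precisely the case the proposition must handle.

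Second, and more fundamentally, once the partition is corrected the crux is to show that each tile loses \emph{at most one} extra face when $\Sd(\tau_0 \cup \dots \cup \tau_m)$ is deleted, so that it is a Morse tile in the sense of Definition \ref{defMorsetiles}; your inductive hypothesis --- merely that $\Sd$ of each lower-dimensional relative simplex carries \emph{some} shellable Morse tiling --- is too weak to deliver this, since it says nothing about how those tiles sit relative to the removed faces, and a cone over a tile that already carries a Morse face, further deprived along its base, could a priori acquire two independent missing faces of higher codimension. The paper's proof needs no induction on dimension here: it takes the known shellable tiling of $\Sd(T^n_k)$ by basic tiles (obtained by iterating Theorem \ref{theotiling}), notes that the simplex underlying each tile $T_p$ is a chain $\sigma_0^p \subsetneq \dots \subsetneq \sigma_n^p$ of faces of $\sigma$, and observes that, $\tau$ being a union of faces, the set of indices $i$ with $\sigma_i^p \subset \tau$ is an initial segment $\{0, \dots, i_p\}$ with $i_p < n-1$; hence $T_p \cap \Sd(\tau)$ is the \emph{single} face $T_p \cap [\sigma_0^p, \dots, \sigma_{i_p}^p]$ and $T_p \setminus \Sd(\tau)$ is Morse. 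This chain argument is the entire point of the proposition --- it explains why a single barycentric subdivision suffices even when many higher-codimensional faces are removed, whereas general stellar subdivisions require the iteration of Proposition \ref{proprelative} --- and it, or an equivalent strengthened induction tracking intersections with $\Sd(\tau)$, is what your proposal is missing. The order-zero and order-$(n+1)$ clauses then follow because only the first (resp.\ last) tile of the shelling of $\Sd(T^n_k)$ can be closed (resp.\ open), which happens exactly when $k=0$ (resp.\ $k=n+1$).
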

By barycentric subdivision of a relative simplex $P = \sigma \setminus (\tau_0 \cup \dots \cup \tau_k)$, we mean the relative simplicial complex $\Sd (P) = \Sd (\sigma) \setminus (\Sd(\tau_0) \cup \dots \cup \Sd(\tau_k))$. 

\proof
Let $P=T^n_k \setminus \tau$ be a relative $n$-simplex which has been deprived of $k$ ridges together with a union of higher codimensional faces $\tau$. By successive applications of Theorem \ref{theotiling}, $\Sd (T^n_k)$ carries a shellable tiling which uses only regular $n$-dimensional basic tiles together with a unique closed (resp. open) $n$-simplex if $k=0$ (resp. $k=n+1$), see Theorem $4.2$ of \cite{SaWel1}. Let us denote these tiles by $T_1 , \dots , T_{(n+1)!}$ following the shelling order, where $T_1$ (resp. $T_{(n+1)!}$) is the closed (resp. open) simplex in case $k=0$ (resp. $k=n+1$). By definition, the simplex $\overline{T}_p$ underlying $T_p$ reads $\{ {\sigma}_0^p, \dots , {\sigma}_n^p \}$, where for every $0 \leq i \leq j \leq n$, $\sigma_j^p$ is a $j$-dimensional face of $\overline{P}= \sigma$ containing $\sigma_i^p$. For every $p \in \{1 , \dots , (n+1)!\}$, such that $T_p$ intersects $\Sd (\tau)$, let us denote by $i_p$ the greatest element in $\{ 0 , \dots , n \}$ such that ${\sigma}_{i_p}^p$ is contained in $\tau$. Then, $\sigma_{i}^p$ is contained in $\tau$ for every $0 \leq i \leq i_p$ and moreover $i_p < n-1$ by assumption. We deduce that ${T}_p \cap \Sd(\tau)$ coincides with the face ${T}_p \cap [{\sigma}_0^p, \dots , {\sigma}_{i_p}^p]$, so that ${T}_p \setminus \Sd(\tau)$ is a Morse tile which can moreover be of order zero (resp. $n+1$) only if $p=1$ (resp. $p=(n+1)!$) and if $T_1$ (resp. $T_{(n+1)!}$) is a closed (resp. open) simplex. The result follows.
\qed

\begin{rem}
Performing another barycentric subdivision, it would be possible to guarantee that the Morse tiling given by Proposition \ref{propbaryc} uses only closed simplices as tiles of order zero.
However, it does not seem possible in general to get rid of the regular Morse tiles not given by Definition \ref{deftiles} using only barycentric subdivisions, as the example of a subdivided critical tile of index two in dimension three shows, see Remark \ref{remc32}. Recall that likewise, by the works of K. A. Adiprasito and B. Benedetti \cite{AdipBen,AdipBen2}, every triangulation of a convex polytope becomes collapsible after one barycentric subdivision and shellable after two.
\end{rem}

\begin{prop}
\label{propMorse}
Every non basic regular Morse tile carries a shellable $h$-tiling after $2^{l-k}$ stellar subdivisions at facets, where $k$ denotes its order and $l\geq k$ the dimension of its Morse face. Moreover, the tiling uses $2^{l+1-k}$ critical tiles of indices ranging from $k$ to $l+1$, the ones of indices $k$ and $l+1$ being unique. The same result holds true using stellar subdivisions at ridges instead, or mixed subdivisions at ridges and facets. 
\end{prop}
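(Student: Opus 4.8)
The plan is to argue by induction on the single integer $l-k \geq 0$, using one stellar subdivision at a facet as the inductive engine and reading off what it produces from Theorem \ref{theotiling}. Write $T$ for the given non-basic regular Morse tile, $n$ for its dimension and $\mu$ for its Morse face, so that $\ord (T)=k$, $\dim (\mu)=l$ and, $\mu$ having codimension at least two, $k \leq l \leq n-2$. The key structural observation is that a facet of a relative simplicial complex is a maximal, hence unshared, relative simplex: subdividing at the underlying simplex $\overline{T}$ leaves $\partial \overline{T}$ and therefore every neighbouring tile untouched, so that the whole subdivision affects $T$ alone. This is what keeps the bookkeeping clean and lets me iterate tile by tile.

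First I would treat the base case $l-k=0$. Applying Theorem \ref{theotiling} to $T$ with $\tau = \overline{T}$ and $\emptyset \neq \mu \subsetneq \tau$, the equality $\dim (\mu)=\ord (T)$ forces the number $\dim (\mu)-\ord (T)$ of auxiliary regular Morse tiles to vanish, so the single subdivision already yields a shellable $h$-tiling made of regular basic tiles together with exactly two critical tiles, of the consecutive indices $l=k$ and $l+1=k+1$; this realizes $2^{0}=2^{l-k}$ subdivisions and $2=2^{l+1-k}$ critical tiles, each of a distinct and therefore unique extremal index. For the inductive step $l-k \geq 1$, I would again subdivide $T$ at the facet $\tau=\overline{T}$. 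By Theorem \ref{theotiling} this produces two critical tiles of indices $l$ and $l+1$, some regular basic tiles, and $l-k$ regular Morse tiles whose Morse faces have dimension $l-1$ and whose orders run through $j \in \{k,\dots,l-1\}$. Each of these is again a non-basic regular Morse tile, now with parameters $(j,l-1)$ satisfying $(l-1)-j<l-k$, so the induction hypothesis refines it into a shellable $h$-tiling after $2^{(l-1)-j}$ further facet subdivisions, using $2^{l-j}$ critical tiles of indices from $j$ to $l$, with the extremal ones unique. Summing the resulting geometric series gives $1+\sum_{j=k}^{l-1}2^{(l-1)-j}=2^{l-k}$ subdivisions and $2+\sum_{j=k}^{l-1}2^{l-j}=2^{l+1-k}$ critical tiles, as required.

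It then remains to pin down the index range and the two uniqueness statements, and to verify shellability. The $l-k$ refined Morse tiles collectively contribute critical indices covering $\{k,\dots,l\}$, while the first subdivision contributes $l$ and $l+1$, so the critical indices range exactly from $k$ to $l+1$. The index $l+1$ arises only at the first subdivision, where it is unique, and the index $k$ arises only in the refinement of the single Morse tile of order $j=k$, where it is unique by induction; hence both extremal critical tiles are unique. For shellability I would interleave the induced shelling of each refined Morse tile into the ambient Morse shelling at the position that tile occupies: if $S_{p-1}$ is the closed stage preceding a facet tile $U_p$ and $W_r$ denotes any closed stage of the shelling of $\st_{\overline{U_p}}(U_p)$, then every limit point of $W_r$ lying outside $W_r$ is already a removed face of $U_p$, and such faces lie in $S_{p-1}$, so $S_{p-1}\cup W_r$ is closed. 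This interleaving, which also underlies Corollary \ref{cortilingintro}, turns the concatenated filtration into a shelling.

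Finally, for the variants using ridges or mixed subdivisions, I would at each step replace the facet $\tau=\overline{T}$ by a ridge $\tau \supsetneq \mu$, which exists precisely because $l \leq n-2$; since every Morse tile created along the way again has a Morse face of codimension at least two, such a ridge remains available at every stage and Theorem \ref{theotiling} produces the identical tile counts. The genuine subtlety here, which I expect to be the main obstacle, is that a ridge, unlike a facet, is shared with a neighbouring tile, so one must check that the subdivision re-tiles that neighbour without manufacturing extra critical tiles. This is exactly the content of Corollaries \ref{cortiling} and \ref{cortilingintro}: the shared ridge $\tau$ meets each adjacent basic or critical tile along a face lying outside its Morse face, so the neighbour is re-tiled by basic and critical tiles with the same critical count, leaving all the totals above unchanged.
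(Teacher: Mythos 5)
Your proposal is correct and follows essentially the same route as the paper: both arguments iterate the last part of Theorem \ref{theotiling} (first subdivision yielding the two critical tiles of indices $l$ and $l+1$ plus the $l-k$ regular Morse tiles of orders $k,\dots,l-1$), handle ridge subdivisions by re-tiling the neighbour via the first part of Theorem \ref{theotiling} without creating new critical tiles, and obtain shellability by splicing the induced shellings into the ambient order. The only difference is bookkeeping: you recurse on $l-k$ and sum a geometric series $1+\sum_{j=k}^{l-1}2^{l-1-j}=2^{l-k}$, whereas the paper organizes the same induction generation by generation and sums $\sum_{m=0}^{l-k}\binom{l-k}{m}=2^{l-k}$ -- a cosmetic distinction.
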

For a basic or critical tile, no subdivision is needed at all to get an $h$-tiling, so that we assume $l \geq k$ in Proposition \ref{propMorse}.
\proof
The result follows by induction from Theorem \ref{theotiling}. Let $T$ be an $n$-dimensional Morse tile of order $k$ with $l$-dimensional Morse face $\mu$, $l \geq k$. We perform a first stellar subdivision, either at its facet, or at any ridge containing $\mu$. By the last part of Theorem \ref{theotiling}, the resulting relative simplicial complex inherits a Morse shelling using two critical tiles of indices $l$ and $l+1$ together with $l-k$ regular Morse tiles of orders ranging from $k$ to $l-1$, the remaining tiles being basic and regular. For each of the $l-k$ regular Morse tiles of order $j \in \{ k , \dots, l-1 \}$, we then perform a stellar subdivision either at its facet, or at any ridge containing its Morse face, so that again by the last part of Theorem \ref{theotiling}, the subdivided tile inherits a Morse shelling with two critical tiles of indices $l-1$ and $l$ together with $l-1-j$ regular Morse tiles having $(l-2)$-dimensional Morse faces. Moreover, if the ridge is adjacent to another Morse tile, the latter gets subdivided as well, but inherits a Morse shelling using an isomorphic tile together with regular basic tiles by the first part of Theorem \ref{theotiling}, so that the $l-k$ stellar subdivisions have to be performed one after the other.
This second step of the induction requires thus ${l-k \choose 1}$ stellar subdivisions and produces a Morse tiling with ${l-k \choose 2} = \# \{ k \leq j_1 \leq j_2 \leq l-2 \}$ non basic regular Morse tiles together with  ${l-k \choose 1}$ pairs of critical tiles of indices $l-1$ and $l$, while the $m$-th step of the induction requires ${l-k \choose m-1}$ stellar subdivisions and produces a Morse tiling with ${l-k \choose m} = \# \{ k \leq j_1 \leq \dots \leq j_m \leq l-m \}$ non basic regular Morse tiles together with  ${l-k \choose m-1}$ pairs of critical tiles of indices $l-m+1$ and $l-m+2$. The result follows after $l-k+1$ steps and $\sum_{m=0}^{l-k} {l-k \choose m} = 2^{l-k}$ stellar subdivisions which produce $2 \times 2^{l-k}$ critical tiles of indices ranging from $k$ to $l+1$, where however only the first (resp. last) subdivision produces a critical tile of index $l+1$ (resp. $k$).
\qed

\begin{ex}
\label{exstellar}
If $P$ is a three-simplex deprived of one ridge and one edge, then $\st_{\overline{P}} (P)$ gets tiled by two critical tiles of consecutive indices one and two and two basic tiles of order one and three.
\end{ex}
Let us now prove Corollary \ref{cortilingintro}. 

\begin{proof}[Proof of Corollary \ref{cortilingintro}]
The first part of Corollary \ref{cortilingintro} follows from Corollary \ref{cortiling} and concatenation of the shelling orders. The remaining parts of Corollary \ref{cortilingintro} follow from Theorem \ref{theotiling} along the same lines as Proposition \ref{propMorse}.
\end{proof}

In fact, there is no need of barycentric subdivision  to get a Morse tileable subdivision as in Proposition \ref{propbaryc}. 

\begin{prop}
\label{proprelative}
Every relative simplex $P$ carries a shellable Morse tiling after finitely many stellar subdivisions at facets, or also after finitely many stellar subdivisions at ridges, or after mixed ones. Moreover, the tiling uses a Morse tile of order zero (resp. an open simplex) iff $P$ has not been deprived of any ridge (resp. has been deprived of all its ridges) and this tile is then unique.
\end{prop}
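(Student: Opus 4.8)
The plan is to argue by induction on the dimension $n$ of $\overline{P}$, upgrading the coning construction already used in the proof of Theorem \ref{theotiling} and mirroring the inductions of Propositions \ref{propbaryc} and \ref{propMorse}. Write $P = \overline{P} \setminus L$, where $L \subset \partial\overline{P}$ is the subcomplex collecting the removed faces, and let $\hat{v}$ denote the barycenter of $\overline{P}$. The base cases $n \leq 1$ are immediate: every relative edge is already a single basic or critical tile, hence a shellable Morse tiling requiring no subdivision, and the order-zero versus open-simplex alternative is read off directly, an open edge appearing exactly when both ridges are removed. For the inductive step I would first perform the stellar subdivision at the facet $\overline{P}$, so that $\st_{\overline{P}}(\overline{P}) = \hat{v} * \partial\overline{P}$ while no boundary face is touched; in particular $L$ is left unchanged inside the base $\partial\overline{P}$.

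Next I would shell $\partial\overline{P}$ by the basic tiles $T_i = \sigma_i \setminus (\sigma_0 \cup \dots \cup \sigma_{i-1})$ and cone them at $\hat{v}$, obtaining basic tiles $T'_i = \hat{v} * T_i$ of the same order, exactly as in the proof of Theorem \ref{theotiling}. Subtracting $L$, the $i$-th piece becomes $(\hat{v} * T_i) \setminus (L \cap T_i)$, whose non-ridge removed part is now confined to the single ridge $\sigma_i$. Here $T_i \setminus (L \cap T_i)$ is a relative $(n-1)$-simplex, so by the inductive hypothesis it carries a shellable Morse tiling after finitely many stellar subdivisions of $\sigma_i$; each such subdivision of a face of $\sigma_i$ is realized as a stellar subdivision at the corresponding ridge of the cone $\hat{v} * \sigma_i$, which both keeps us inside the allowed class and accounts for the ridge-only and mixed variants. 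Coning this base tiling produces top-dimensional tiles $\hat{v} * S$, and since the cone of a Morse tile is again a Morse tile of the same order (its Morse face coned one dimension higher), these are legitimate tiles by Definition \ref{defMorsetiles}. The complementary solid region $\hat{v} * (L \cap \sigma_i) \setminus (L \cap \sigma_i)$, a cone over the removed base with that base deleted and of dimension $< n$, is then filled by the same construction applied recursively: one Morse-tiles the subcomplex $L \cap \sigma_i$ and cones each of its tiles with its base removed, producing lower-dimensional half-open Morse tiles, just as a coned vertex minus the vertex yields a basic tile of order one.

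Finally I would assemble and order everything: list the pieces by the boundary-shelling index $i$, within each piece follow the coned base shelling, and append the lower-dimensional filler tiles, then check that for every $d \geq 0$ the union of tiles of dimension $\geq d$ is closed and that the partial unions along this order are closed, which is Definition \ref{deftiling}. The uniqueness claim is tracked through the coning: an order-zero tile (a closed $n$-simplex) can only arise as the cone of an order-zero base tile sitting over a region that loses no ridge, which by induction happens if and only if $P$ loses no ridge, and dually an open $n$-simplex occurs if and only if all $n+1$ ridges are removed, each such tile being unique. The ridge-only and mixed statements follow by replacing the initial facet subdivision by a ridge subdivision and repeating. The main obstacle is precisely the recursive tiling of the base-removed region $\hat{v} * (L \cap \sigma_i) \setminus (L \cap \sigma_i)$: the induction must be strengthened to run simultaneously over relative simplices and over cones on subcomplexes of a simplex with base deleted, and one must verify that the concatenated family genuinely satisfies the global closedness condition and assembles into an honest shelling, with the order bookkeeping preserved at each gluing.
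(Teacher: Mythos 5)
Your construction breaks down at the coning step, and the failure is structural rather than a matter of bookkeeping. When you cone a Morse tiling of the relative $(n-1)$-simplex $T_i \setminus (L \cap T_i)$, the resulting $n$-dimensional tiles cover only the segments from $\hat{v}$ to points of $\vert T_i \setminus L \vert$; the open segments from $\hat{v}$ to points of $\vert L \cap \overline{T_i} \vert$ (minus their base endpoints) lie in $\vert P \vert$ but are left uncovered, and you propose to fill them with tiles of dimension $<n$. But such filler tiles are inadmissible: Definition \ref{deftiling} requires, taking $d = n-1$, that the union of the tiles of dimension greater than $n-1$ be closed in the geometric realization, and as the paper observes immediately after that definition, this forces the closures of all tiles to be maximal. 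In your configuration the union of the $n$-dimensional tiles is \emph{not} closed --- its closure contains exactly the open cone segments over $\vert L \cap T_i \vert$ that your lower-dimensional fillers occupy. So the ``main obstacle'' you flag at the end cannot be absorbed by strengthening the induction to cones-with-base-deleted: those objects are forbidden as tiles outright. The only repair is to redistribute the missing cone segments among the $n$-dimensional tiles, i.e.\ to take tiles of the form $(\hat{v} * \overline{T_i})$ deprived of faces of $L$ sitting in the base --- but these are again relative $n$-simplices with possibly several higher-codimensional missing faces, the very situation you started from, so your induction on dimension does not close.

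This is precisely why the paper's proof inducts on a different quantity. It never cones a tiling of the base: every tile stays $n$-dimensional, namely $T'_j = \widetilde{T}_j \setminus (\tau_0 \cup \dots \cup \tau_k)$ with its whole base ridge retained (minus the faces of $L$ inside it), and the induction runs on a defect measure --- the number of missing faces of codimension greater than one and their total dimension. The vertex labelling is chosen so that the removed ridges $\tau_0, \dots, \tau_{k_1-1}$ are swept first by the boundary shelling (those cone tiles become basic tiles of order $j+1$) and so that $v_{k_1}$ is a vertex of $\tau_{k_1+1}$, which guarantees that each remaining tile either has strictly fewer higher-codimensional missing faces or carries $\overline{T}'_{k_1} \cap \tau_{k_1+1}$ of strictly smaller dimension; finitely many stellar subdivisions at the facet, or at the boundary ridges $\sigma_0$ and $\sigma_n$, then reduce every tile to a Morse tile, and the order-zero/open-simplex uniqueness is read off from whether $k_1 = 0$ or all ridges were removed. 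A secondary slip in your argument: a ridge subdivision of the base $\sigma_i$ is a stellar subdivision at a codimension-two face of the ambient complex, so your inductive hypothesis could at best be invoked in its facets-only form (facet subdivisions of $\sigma_i$ being ridge subdivisions of the ambient complex); that part is repairable, but the filler-tile issue is not.
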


\proof
Let $P=\sigma \setminus (\tau_0 \cup \dots \cup \tau_{k})$ be a relative $n$-simplex, where $\tau_0, \dots, \tau_{k}$ are non-empty proper faces of the $n$-simplex $\sigma$. We may assume that
$\tau_0, \dots , \tau_{k_1 - 1}$ have codimension one in $\sigma$ and $\tau_{k_1}, \dots , \tau_{k}$ codimensions greater than one, with $0 \leq k_1 \leq k-1$. We then denote by $v_0, \dots, v_n$ the vertices of $\sigma$ in such a way that for every $0 \leq j \leq k_1$, $v_j$ is not contained in $\tau_j$ and that $v_{k_1}$ is a vertex of $\tau_{k_1 + 1}$. If we denote by $\sigma_j $ the ridge of $\sigma$ opposite to $v_j$, $0 \leq j \leq n$, this labelling induces a shelling $T_0 \sqcup \dots \sqcup T_n$ of $\partial \sigma$, where $T_0 = \sigma_0$ and for every $j \in \{ 1 , \dots , n \}$, $T_j = \sigma_j \setminus (\sigma_0 \cup \dots \cup \sigma_{j-1})$. Then, $T_0 \cup \dots \cup T_{k_1-1}$ coincides with
$\tau_0 \cup \dots \cup \tau_{k_1-1}$, $T_ {k_1} \cap \tau_{k_1+1}$ has smaller dimension than $ \tau_{k_1+1}$  and $T_j \cap  \tau_{k_1} = \emptyset$  for every $j > k_1$. The stellar subdivision of $\sigma$ at its maximal face is then shelled by $\widetilde{T}_0 \sqcup \dots \sqcup \widetilde{T}_n$, where for every $j \in \{ 0 , \dots , n \}$, $\widetilde{T}_j$ is the cone with apex $\hat{\sigma}$ over $T_j$. It is a basic tile of order $j$ which contains $\hat{\sigma}$ only if $j=0$, see Proposition $4.1$ of \cite{SaWel1}. The stellar subdivision of $P$ at $\sigma$ is then shelled by the relative simplices $T'_0 \sqcup \dots \sqcup T'_n$, where for every $j \in \{ 0 , \dots , n \}$, $T'_j = \widetilde{T}_j \setminus (\tau_0 \cup \dots \cup \tau_{k})$. In particular, for every $j \in \{ 0 , \dots , k_1-1 \}$, $T'_j$ is a basic tile of order $j+1$, see Proposition $4.1$ of \cite{SaWel1}, for every $j \in \{ k_1+1 , \dots , n \}$, $T'_j$ has been deprived of less faces of codimensions greater than one than $P$ and the dimension of $\overline{T}'_{k_1} \cap \tau_{k_1 +1}$ is less than the one of  $ \tau_{k_1 +1}$. 
Likewise, the stellar subdivision of $\sigma$ (resp. $\sigma \setminus \sigma_0$) at $\sigma_n$ (resp. $\sigma_0$) is shelled by the cones with apex $\hat{\sigma}_n$ (resp.  $\hat{\sigma}_0$ ) over 
$T_0 \cup \dots \cup T_{n-1}$ (resp. $T_1 \cup \dots \cup T_{n}$). The stellar subdivision of $P$ at $\sigma_n$ (resp. at $\sigma_0$ if $k_1 >0$) is then shelled by the latter deprived of $\tau_0 \cup \dots \cup \tau_{k}$ (resp. $\tau_1 \cup \dots \cup \tau_{k}$) and these relative simplices either have been deprived of less faces of codimensions greater than one than $P$, or of faces of lower dimensions. In all cases, we deduce the result after finite induction, since in the case of stellar subdivision at a ridge, we can always assume this ridge to be either $\sigma_0$ or $\sigma_n$ depending on whether  it belongs to $P$ or not, and at each step of the induction, choose this ridge adjacent to a relative simplex having more than one higher codimensional missing face and for which the total dimension of the latter is maximal among all the relative simplices of the tiling. The resulting shelled tiling cannot use any open simplex unless $P$ is itself an open one while it uses a Morse tile of order zero, which is unique, iff $k_1=0$.
\qed

\begin{rem}
\label{remalgo}
1) The proof of Proposition \ref{proprelative} relies on the fact that given any ridge $\sigma$ of a relative simplex $P$, $\st_\sigma (P)$ inherits a shellable tiling by relative simplices for which the total dimension of the higher codimensional missing faces has decreased, provided they have been deprived of at least two such faces. 

2) The proofs of Propositions \ref{propbaryc}, \ref{propMorse} and \ref{proprelative} are algorithmic. Moreover, in bounded dimension the number of relative simplices is finite so that these algorithms produce the sequence of subdivisions together with the shellings in finite bounded time.
\end{rem}

\section{Existence of shellable $h$-tilings}
\label{sectilings}

We are now ready to prove Theorem \ref{theostellar} which provides the existence of shellable $h$-tilings on all finite relative simplicial complexes after finitely many stellar subdivisions at facets, or ridges. 

\proof[Proof of Theorem \ref{theostellar}]
Let $S = \overline{S} \setminus L$ be a relative simplicial complex, where $L$ denotes a subcomplex of the finite simplicial complex $\overline{S}$ which does not contain any maximal simplex. Let us number the facets of $\overline{S}$ in decreasing dimensions by $\sigma_1, \dots, \sigma_N$. It induces a filtration $\emptyset = \overline{S}_0 \subset \overline{S}_1 \subset \dots \subset \overline{S}_N = \overline{S}$ of subcomplexes, where for every $j \in \{ 1, \dots , N \}$, $\overline{S}_j$ denotes the complex containing $\sigma_1, \dots, \sigma_j$ together with their faces. We then set $P_j = \overline{S}_j \setminus (\overline{S}_{j-1} \cup L)$. By construction, for every $d \geq 0$, the union of these relative simplices which have dimensions greater than $d$ is closed in $\vert S \vert$, for there exists $j_d \in  \{ 1, \dots , N \}$ such that it coincides with $\vert \overline{S}_{j_d} \setminus L \vert$. Proposition \ref{propbaryc} then provides a Morse shelling on $\Sd (P_j)$ for every $j \in \{ 1, \dots , N \}$ and the first part of Theorem \ref{theostellar} follows by concatenation of these shelling orders. Likewise, Propositions \ref{proprelative} and \ref{propMorse} provide a finite sequence of stellar subdivisions at facets on each relative simplex $P_j$, $j \in \{ 1, \dots , N \}$, together with a shelled tiling on the resulting subdivided relative simplex. The second part of Theorem \ref{theostellar} again follows by concatenation of these shelling orders in the case of stellar subdivisions at facets. 
In order to get the result using stellar subdivisions at ridges instead, we proceed by induction as in the proof of Proposition \ref{proprelative} to turn this shelling by relative simplices into some Morse shelling. At each step of the induction, we choose a (non missing) ridge of a relative simplex $P$ having more than one higher codimensional missing faces, the total dimension of the latter being maximal among all the relative simplices of the tiling. By the closedness condition of Definition \ref{deftiling}, this ridge is a ridge of $S$ as well. 
Then, as observed in the proof of Proposition \ref{proprelative}, the subdivided relative complex inherits a shelling by relative simplices whose higher codimensional missing faces, when they are at least two of them, either have lower total dimension than the ones of $P$, or have same dimension, but the number of such tiles has decreased, see Remark \ref{remalgo}. After finitely many steps, we thus get a Morse shelled finite relative simplicial complex.  Now, in order to turn this Morse shelling into some shelled $h$-tiling, we again proceed by finite induction as in the proof of Proposition \ref{propMorse}. At each step of the induction, we choose a non basic regular Morse tile of the tiling together with a ridge containing its Morse face and perform a stellar subdivision at this ridge which by the closedness condition of Definition \ref{deftiling} is a ridge of $S$ as well. The subdivided tile itself inherits a Morse shelling whose non basic regular Morse tiles have lower dimensional Morse faces by the last part of Theorem \ref{theotiling} while each of the other tiles adjacent to this ridge, once subdivided, inherits a Morse shelling using one isomorphic tile together with regular basic tiles by the first part of Theorem \ref{theotiling}. We thus get the result after finite induction. 
Finally, in bounded dimension, the sequences of subdivisions together with the shellings given by Propositions \ref{propbaryc}, \ref{proprelative} and \ref{propMorse} are produced in bounded finite time by the corresponding algorithms, see Remark \ref{remalgo}. The time complexity of the algorithm producing the subdivisions and shellings of $S$ is thus of the same order as the complexity of the algorithm which ranges the facets of $S$ in decreasing dimensions and then, for each facet, computes its intersection with the previous ones. It is thus quadratic in the size of $S$, given by its number of facets. 
\qed \\

In the case of closed pseudo-manifolds, Theorem \ref{theostellar} can be slightly precised and the algorithm given in its proof slightly improved. 
\begin{theo}
\label{theostellarman}
In the case of closed strongly connected pseudo-manifolds, the shellable tilings given by Theorem \ref{theostellar}, either after one barycentric subdivision or after finitely many stellar subdivisions, can be chosen to use a unique closed simplex and at least one open one,  and no other tiles of order zero. 
\end{theo}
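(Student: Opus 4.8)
The plan is to rerun the proof of Theorem~\ref{theostellar}, but with a more careful choice of the ordering of the facets: strong connectivity will be used to ensure that only the first relative simplex of the filtration is a full closed simplex, while closedness will ensure that the last one is open. The numbers of order-zero and of order-$(n+1)$ tiles are then governed entirely by the uniqueness clauses of Propositions~\ref{propbaryc}, \ref{proprelative} and~\ref{propMorse}.

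First I would exploit the strong connectivity of $K$: its dual graph, whose vertices are the facets and whose edges are the ridges, is then connected, so that a spanning-tree traversal yields an ordering $\sigma_1, \dots, \sigma_N$ of the facets in which every $\sigma_j$ with $j \geq 2$ meets $\overline{S}_{j-1} = \sigma_1 \cup \dots \cup \sigma_{j-1}$ along a ridge. For the associated relative simplices $P_j = \overline{S}_j \setminus \overline{S}_{j-1}$, this means that $P_1 = \sigma_1$ is deprived of no ridge, whereas every $P_j$ with $j \geq 2$ is deprived of at least one. Since moreover $K$ is a closed pseudo-manifold, each ridge of the last facet $\sigma_N$ belongs to exactly one further facet, necessarily earlier in the ordering, so that $P_N$ is deprived of all its ridges and is therefore an open simplex.

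Next I would tile each $P_j$ exactly as in the proof of Theorem~\ref{theostellar}, using Proposition~\ref{propbaryc} after a single barycentric subdivision and Propositions~\ref{proprelative} and~\ref{propMorse} after finitely many stellar subdivisions, and then concatenate the resulting shellings along the filtration. By the uniqueness clauses quoted above, the tiling of $P_j$ uses an order-zero tile if and only if $P_j$ is deprived of no ridge, that is if and only if $j = 1$, in which case this tile is the unique closed simplex; and it uses an open simplex whenever $P_j$ is deprived of all its ridges, which happens at least for $j = N$. This settles the barycentric case at once. For the $h$-tiling produced by stellar subdivisions it remains to verify that the Morse-to-$h$ refinement creates no new closed simplex. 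The tiling of $P_1$ is already an $h$-tiling, by one closed simplex together with regular basic tiles, hence needs no refinement; and every regular Morse tile arising in some $P_j$ with $j \geq 2$ has order $k \geq 1$ and, by the constraint $\dim(\mu) \geq \ord(T)$ underlying the last case of Theorem~\ref{theotiling}, a Morse face of dimension $l \geq k$. By Proposition~\ref{propMorse} its refinement therefore uses only critical tiles of indices ranging from $k \geq 1$ to $l+1$, none of which is a closed simplex. Finally, the unique closed simplex coming from $P_1$ has empty Morse face, so by the first case $\tau \not\subset \mu$ of Theorem~\ref{theotiling} any later ridge subdivision adjacent to it only reproduces one isomorphic closed simplex together with regular basic tiles, keeping the count equal to one, while the open simplices are critical of index $n$ and, by Corollary~\ref{cortiling}, reproduce open simplices under any adjacent ridge subdivision, so that at least one survives.

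The main obstacle lies in this last verification, because the ridge subdivisions of the refinement step are global and may propagate across the whole complex: a subdivision falling into the last case $\emptyset \neq \mu \subsetneq \tau$ of Theorem~\ref{theotiling} with a zero-dimensional Morse face would manufacture a spurious critical tile of index zero, that is an additional closed simplex. What rules this out is exactly that the ridge-connected ordering forces every refinable Morse tile of a $P_j$ with $j \geq 2$ to have positive order, together with the standing inequality $l \geq k$, so that the smallest index ever produced equals the order $k \geq 1$. The delicate point is to check that these two properties are preserved not merely within a single subdivided simplex, as in Propositions~\ref{propMorse} and~\ref{proprelative}, but under the successive global subdivisions performed on the entire complex.
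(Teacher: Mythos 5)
Your proposal is correct and essentially reproduces the paper's own proof: the paper likewise uses strong connectedness to build a filtration $K_1 \subset \dots \subset K_N$ in which each new facet is attached along a ridge (by a greedy free-ridge induction rather than your spanning tree of the dual graph, an immaterial difference), so that $P_1$ is a closed simplex, each $P_j$ with $j \geq 2$ is deprived of at least one ridge, and $P_N$ is an open simplex, and then applies the algorithm of Theorem \ref{theostellar} together with the uniqueness clauses of Propositions \ref{propbaryc} and \ref{proprelative}. Your closing verification that the refinement steps of Proposition \ref{propMorse} and the ridge-subdivision propagation of Theorem \ref{theotiling} create no new order-zero tiles is left implicit in the paper, and your order $k \geq 1$ invariant is exactly the right justification for it.
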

Theorem \ref{theostellarman} gets much stronger than Theorem $1.2$ of \cite{Wel1} which guarantees the existence of Morse shellable triangulations on all finite products of closed manifolds of dimensions less than four. In the case of closed three-manifolds, we proved in \cite{SaWel2} that such a Morse shelling can be chosen to use the same number of critical tiles, with same indices, as any given smooth Morse function on the manifold, compare Proposition \ref{propknot} and \cite{Ben}. 

\proof
We first proceed as in the proof of Theorem $1.3$ of \cite{SaWel2}, which concerns the case of surfaces, in which case no subdivision is needed at all. Let $K$ be a closed strongly connected pseudo-manifold and let $P_1$ be any of its facets. We set $K_1 = P_1$ and proceed by finite induction. As long as $K_j$, $j \geq 1$, is not a pseudo-manifold, it contains a ridge which belongs to a single facet $\theta$ of $K_j$, while by assumption it is contained in two facets $\theta$ and $\sigma_{j+1}$ of $K$.
We then set $K_{j+1} = K_j \cup \sigma_{j+1}$ together with their faces and $P_{j+1} = K_{j+1} \setminus K_j$. We get after finite induction a filtration
$\emptyset = K_0 \subset K_1 \subset \dots \subset K_N $ of subcomplexes, such that $K_N$ is a pseudo-manifold, so that $K_N = K$ by strong connectedness, see \S \ref{subsecprelim}. Moreover, for every $j \geq 1$, $K_j \setminus K_{j-1}$ is the relative simplex $P_j$ which has been deprived of at least one ridge as soon as $j>1$, while $P_N$ does not contain any ridge, so that it is an open simplex. If $n \leq 2$, all these relative simplices are tiles given by Definition \ref{deftiles} and we get a shelled tiling of $K$ using exactly one closed simplex and at least one open one. 
Otherwise, we apply to the filtration $K_0 \subset K_1 \subset \dots \subset K_N $ the algorithm given in the proof of Theorem \ref{theostellar} to get the result.  
\qed

\begin{rem}
\label{remspectral}
1) Performing a single barycentric subdivision on a simplicial complex $K$ requires however to subdivide all its simplices, whereas to get the last parts of Theorems \ref{theostellar} or  \ref{theostellarman}, the algorithm subdivides only some facets or ridges of $K$, which might well be a small amount of them. 

2) Any shelling on a relative simplicial complex $S = K \setminus L$ given by Theorems \ref{theostellar} or  \ref{theostellarman} provides two spectral sequences which converge to the relative (co)homology of the pair $(K,L)$ and whose first pages are spanned by the critical tiles of the tilings, see \cite{Wel2}. They provide a way to compute this (co)homology which is alternative to the discrete Morse theory of R. Forman \cite{For1,For2}. These shellings also encode a class of discrete Morse functions whose critical points are in one-to-one correspondence with the critical tiles of the shellings, preserving the index, since a critical tile of index $k$ collapses on any of its $k$-face while a regular Morse tile consists of a sequence of collapses, see \cite{SaWel2}. 
\end{rem}

\section{Obstructions to shellability}
\label{secobstr}

The algorithm given in the proof of Theorem \ref{theostellar}, which lies in the complexity class $P$, may produce more stellar subdivisions than necessary to tile or shell the complex. 

\begin{defi}
\label{defsK}
For every finite relative simplicial complex $S$, let $s'(S)$ (resp. $s(S)$) be the minimal number of stellar subdivisions at facets required for it to be $h$-tileable (resp. to carry a shellable $h$-tiling).
\end{defi}

By Theorem \ref{theostellar}, $0 \leq s'(S) \leq s(S) < +\infty$ and the {\it stellar complexity} $s(S)$ vanishes if and only if $S$ carries a shellable $h$-tiling. Recall that deciding whether a finite simplicial complex is shellable in the classical sense, or collapsible (resp. contractible), is $NP$-complete (resp.  undecidable) by \cite{Goaoc,Tan} (resp. \cite{VKF}, see also \cite{Tan}). What about the complexity of deciding whether its stellar complexity vanishes or of computing it?

\begin{ex}
\label{exsK}
The simplicial complex $K$ consisting of two triangles sharing a common vertex is not $h$-tileable, as already observed in \S \ref{subsecgeompartition}. It however carries a shellable $h$-tiling after a single stellar subdivision at one triangle, see Figure \ref{figstellar} where the numbers refer to the indices of the critical tiles used by the shelling, so that $s(K) = s'(K) = 1$. 
\end{ex}

\begin{figure}[h]
   \begin{center}
    \includegraphics[scale=1]{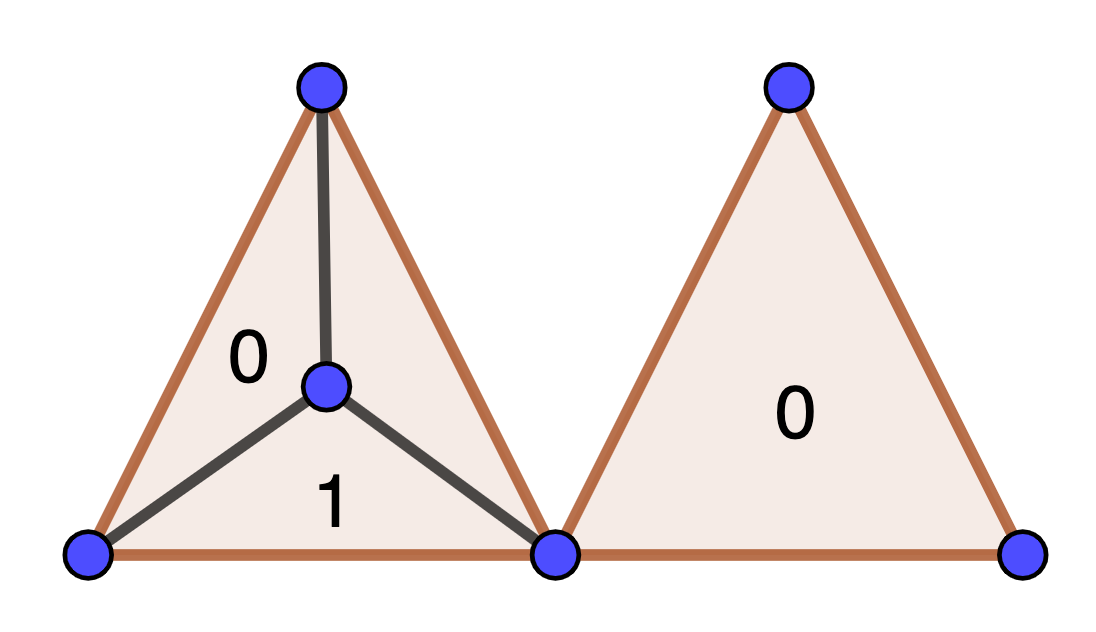}
    \caption{A shelled $h$-tiling.}
    \label{figstellar}
      \end{center}
 \end{figure}
 
 \begin{rem}
In Example \ref{exsK}, whatever the number of stellar subdivisions at facets performed on $K$, every shelled $h$-tiling on the resulting complex requires at least two critical tiles of index zero, see Proposition \ref{propm}, so that the filtration given by the shelling has to contain some disconnected members. This contrasts with the classical definition of shelling which requires each member to be connected and the complex is thus not shellable, though it is collapsible. 
 \end{rem}

If now the finite relative simplicial complex $S$ is $h$-tileable, it may carry many different $h$-tilings with even different critical vectors, in the same way as a finite simplicial complex carries many different discrete Morse functions with different Morse numbers, see \cite{For1}. 

\begin{defi}
\label{defmuK}
For every finite relative simplicial complex $S$ such that $s'(S)=0$ (resp. $s(S)=0$), let $\mu' (S)$ (resp. $\mu(S)$) be the infimum over all $h$-tilings (resp. shellable $h$-tilings) $\tau$ on $S$ of the number of critical tiles used by $\tau$.
\end{defi}

Let us now prove Proposition \ref{propmuK}, up to which $\mu-2$ is the obstruction to shellability in the classical sense for closed pseudo-manifolds of positive dimensions with vanishing stellar complexity. 

\begin{proof}[Proof of Proposition \ref{propmuK}]
Any shelled $h$-tiling of a closed pseudo-manifold $K$ of positive dimension $n$ starts with a critical tile of index zero and ends with a critical tile of index $n$. Indeed, the latter is the only kind of tiles which does not contain any codimension one face while by hypothesis, any ridge of $K$ is contained in two facets. Thus, $\mu (K) \geq 2$, with equality if and only if $K$ admits a shelled $h$-tiling without any other critical tile. Such a shelling is then a shelling in the classical sense. Conversely, any classical shelling of a closed $n$-dimensional pseudo-manifold $K$ starts with a closed simplex and ends with an open one, while the intermediate tiles are regular. It thus defines a shelled $h$-tiling using only two critical tiles, so that $\mu (K) = 2$. 
\end{proof}

By Corollary \ref{cortiling}, the Morse numbers given by Definition \ref{defmuK} can only decrease under stellar subdivisions at facets and thus have to stabilize. 
Given a finite relative simplicial complex $S= \st^0 (S)$, let us denote by $\st (S) = \st^1 (S)$ the relative simplicial complex obtained from $S$ after stellar subdivision at all of its facets and for every $d>0$, set $\st^{d+1} (S) = \st (\st^d (S))$. By Theorems \ref{theostellar} and \ref{theotiling}, these carry shellable $h$-tilings for $d$ large enough.

\begin{defi}
\label{defmuinftyK}
For every finite relative simplicial complex $S$, let $\mu'_\infty (S)$ (resp. $\mu_\infty (S)$) be the infimum over all $d \geq 0$ of $\mu' (\st^d (S))$ (resp. $\mu (\st^d (S))$).
\end{defi}

\begin{lemma}
\label{lemmamu}
For every finite relative simplicial complex $S$, $\mu_\infty (S) = \chi (S) \mod(2)$. Likewise, if $s' (S) = 0$ (resp. $s(S) = 0$), $\mu' (S) = \chi (S) \mod(2)$ (resp. $\mu (S) = \chi (S) \mod(2)$ ).
\end{lemma}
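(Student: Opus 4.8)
The plan is to reduce everything to a single parity observation built on the Euler-characteristic identity $\chi(S) = \sum_{k} (-1)^k c_k(\tau)$, valid for every Morse tiling $\tau$ on a relative simplicial complex $S$ (the relative analogue of Lemma $2.5$ of \cite{SaWel2}), together with the trivial congruence $(-1)^k \equiv 1 \pmod 2$. First I would record that the total number of critical tiles used by an $h$-tiling $\tau$ is exactly $\sum_{k} c_k(\tau)$: by Definition \ref{deftiling} such a tiling uses only basic or critical tiles, the basic ones being regular and hence contributing nothing to this count, while $c_k(\tau)$ enumerates the critical tiles of index $k$.

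The core step is then the parity computation. Reducing the identity above modulo $2$ gives
$$\sum_{k} c_k(\tau) \equiv \sum_{k} (-1)^k c_k(\tau) = \chi(S) \pmod 2,$$
so every $h$-tiling on $S$ uses a number of critical tiles whose parity equals that of $\chi(S)$. Consequently, whenever $s'(S) = 0$ (resp. $s(S) = 0$), the integers $\sum_k c_k(\tau)$ obtained as $\tau$ ranges over all $h$-tilings (resp. shellable $h$-tilings) of $S$ all share the parity $\chi(S) \bmod 2$. Since the infimum of a nonempty set of nonnegative integers of fixed parity is attained by well-ordering and therefore retains that parity, I would conclude $\mu'(S) \equiv \chi(S) \pmod 2$ (resp. $\mu(S) \equiv \chi(S) \pmod 2$).

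For $\mu_\infty(S)$ I would first invoke Theorem \ref{theostellar} to fix some $d_0$ with $s(\st^{d_0}(S)) = 0$, so that $\mu(\st^d(S))$ is defined for all $d \geq d_0$ and $\mu_\infty(S)$ is a genuine infimum of such Morse numbers. Because each stellar subdivision preserves the geometric realization up to homeomorphism, one has $\chi(\st^d(S)) = \chi(S)$ for every $d$; applying the previous paragraph to each $\st^d(S)$ gives $\mu(\st^d(S)) \equiv \chi(S) \pmod 2$, and the infimum over $d$ again retains this common parity, yielding $\mu_\infty(S) \equiv \chi(S) \pmod 2$.

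The argument is essentially the one-line parity reduction above once the Euler-characteristic identity is in hand, so I do not expect a genuine obstacle; the only points deserving care are checking that the identity $\chi = \sum_k (-1)^k c_k$ of \cite{SaWel2} transfers verbatim to the relative setting, with $\chi$ the relative Euler characteristic, and that it is invariant under stellar subdivision — both following from the topological invariance of $\chi$ and from the fact that $h$-tilings are particular Morse tilings. (Here $\mu = \chi \bmod 2$ is read as a congruence of parities, consistent with Proposition \ref{propmuK}, which refines it to $\mu(K) \geq 2$ for closed pseudo-manifolds of even residue.)
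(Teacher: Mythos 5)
Your proposal is correct and follows the paper's own argument essentially verbatim: both reduce the identity $\chi(S)=\sum_k (-1)^k c_k(\tau)$ from Lemma $2.5$ of \cite{SaWel2} modulo two to see that every $h$-tiling uses $\chi(S) \bmod 2$ critical tiles in total, then invoke the invariance of the Euler characteristic under subdivision to handle $\mu_\infty$. Your version merely makes explicit a few points the paper leaves implicit (nonemptiness of the set of tilings via Theorem \ref{theostellar}, and that an infimum of integers of fixed parity retains that parity), which is fine but not a different route.
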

In Lemma \ref{lemmamu}, $ \chi (S) $ denotes the Euler characteristic of $S$, so that if $S = K \setminus L$, $ \chi (S) = \chi (K)- \chi (L)$.
\begin{proof}
By additivity of the Euler characteristic and Lemma $2.5$ of \cite{SaWel2}, for every $h$-tiling $\tau$ on a finite  relative simplicial complex $S$, $ \chi (S) = \sum_{k=0}^{+ \infty} (-1)^k c_k (\tau)$. 
Thus, $ \chi (S) $ coincides modulo two with the total number of critical tiles of $\tau$. The result follows, since the Euler characteristic remains invariant under subdivision. 
\end{proof}  

By Theorem \ref{theotiling}, for every finite simplicial complex $K$ such that $s (K) = 0$, $\mu_\infty (K) \leq \mu (K)$, and by Theorem $1.2$ of \cite{SaWel2}, $\mu (K)$ bounds from above the minimal number of critical points of a discrete Morse function on $K$, see Remark \ref{remspectral}. However, even the asymptotic Morse number $\mu_\infty (K)$ might be strictly larger. 
\begin{prop}
\label{propknot}
Let $K$ be a locally constructible triangulated three-sphere which contains a knotted triangle in its one-skeleton. Then, for every $d \geq 0$, there exists a discrete Morse function on $\st^d (K)$ having only two critical points, while $\mu_\infty (K) \geq 4$.
\end{prop}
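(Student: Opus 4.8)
The plan is to prove the two assertions of Proposition \ref{propknot} separately: the existence of a discrete Morse function with exactly two critical cells on every $\st^d(K)$, and the lower bound $\mu_\infty(K) \geq 4$.

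For the discrete Morse function, I would start from the characterisation underlying \cite{BenZ}: since $K$ is a locally constructible $3$-sphere, deleting one facet $\Delta$ leaves a collapsible ball $B$. A collapsible ball carries a discrete Morse function with a single critical cell, and declaring the reinserted $\Delta$ critical of index $3$ yields a discrete Morse function on $K$ with exactly two critical points. To propagate this through the subdivisions, the key step is a lemma asserting that stellar subdivision at a facet preserves collapsibility. Given a collapse of a complex $X$ and a facet $\sigma$, I would mimic that collapse inside $\st_\sigma(X)$, replacing the single elementary collapse that eliminates $\sigma$ through a free ridge $\tau$ by: first collapsing the new tile $\hat\sigma \ast \tau$ across $\tau$, then collapsing the cone $\hat\sigma \ast (\partial\sigma \setminus \tau)$ onto the collapsible disk $\partial\sigma \setminus \tau$. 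Because the cone simplices have cofaces only among themselves, these moves do not interfere with the rest of $X$, and one lands back in the state reached by the original collapse just after eliminating $\sigma$; this lemma can also be deduced from the techniques of \cite{AdipBen}.

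I would then run an induction over single-facet stellar subdivisions, keeping the invariant ``$M$ is a triangulated $3$-sphere possessing a facet whose deletion leaves a collapsible ball''. Subdividing a facet other than the distinguished $\Delta$ keeps $\Delta$ intact and applies the lemma to $B$; subdividing $\Delta$ itself is handled by noting that $\st_\Delta(M)$ with one of the new small tetrahedra $\Delta_0$ deleted collapses onto $B$ (the cone simplices with apex $\hat\Delta$ collapse away, leaving $B \cup \partial\Delta = B$ since $\partial\Delta \subset B$) and thence onto a point, so the invariant survives with $\Delta_0$ as the new distinguished facet. Since every $\st^d(K)$ is obtained by a finite sequence of such single-facet subdivisions, the invariant holds throughout, so each $\st^d(K)$ minus a facet is collapsible and hence carries a discrete Morse function with exactly two critical points.

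For the lower bound I would first reduce $\mu_\infty(K) \geq 4$ to non-shellability. By Lemma \ref{lemmamu} every $\mu(\st^d(K))$ is congruent modulo $2$ to $\chi(K) = \chi(S^3) = 0$, hence even; and by Proposition \ref{propmuK}, applicable because each $\st^d(K)$ is a closed strongly connected $3$-pseudo-manifold and $s(\st^d(K)) = 0$ whenever $\mu(\st^d(K))$ is defined, one has $\mu(\st^d(K)) = 2$ if and only if $\st^d(K)$ is shellable. It therefore suffices to show that no $\st^d(K)$ is shellable. The crucial geometric observation is that stellar subdivision at a facet inserts only the barycentre of a $3$-cell and never subdivides an edge or triangle, so the knotted triangle in the one-skeleton of $K$ persists as the very same three-edge subcomplex, with unchanged knot type, in every $\st^d(K)$. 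Invoking the knot obstruction to decomposability (Lickorish \cite{Lick}, Hachimori--Ziegler \cite{HachZ}, and the bounds of \cite{BenZ}) — a $3$-sphere carrying a sufficiently knotted triangle on three edges is not constructible, a fortiori not shellable, the threshold depending only on the three-edge realisation, which is identical in all $\st^d(K)$ — I conclude that no $\st^d(K)$ is shellable, whence $\mu(\st^d(K)) \geq 4$ for every relevant $d$ and thus $\mu_\infty(K) \geq 4$. I expect the main obstacles to be, first, verifying cleanly that the collapsibility-preservation lemma covers the subdivision of the distinguished facet $\Delta$ (the interplay between the cone collapse and the boundary sphere $\partial\Delta \subset B$), and second, matching the quantitative threshold of the knot obstruction against the range of knots realisable in a locally constructible sphere, so that the hypothesis ``$K$ locally constructible with a knotted triangle'' is at once non-vacuous and strong enough to defeat shellability after every subdivision.
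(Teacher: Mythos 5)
Your proposal is correct and follows essentially the same route as the paper: collapsibility of $K$ minus a facet via Corollary $2.11$ of \cite{BenZ} yielding the two-critical discrete Morse function, persistence of this property and of the knotted triangle under stellar subdivisions at facets, the knot obstruction of \cite{HachZ} to constructibility and hence shellability, and finally Proposition \ref{propmuK} combined with the parity constraint of Lemma \ref{lemmamu} to promote $\mu_\infty(K)>2$ to $\mu_\infty(K)\geq 4$. Two remarks on where you deviate. First, where you prove by hand that stellar subdivision at a facet preserves collapsibility --- including the case of subdividing the distinguished facet $\Delta$, where your observation that $\st_\Delta(M)$ deprived of one small tetrahedron collapses coneward onto $\partial\Delta\subset B$ is correct --- the paper simply invokes Theorem $10.14$ of \cite{Koz2}; your self-contained induction over single-facet subdivisions (legitimate, since $\st(S)$ is a composition of such subdivisions, no two of which interfere) is sound and in fact settles the first obstacle you flagged. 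Second, your hedge ``sufficiently knotted'' is unnecessary: Theorem $1$ of \cite{HachZ}, which is exactly what the paper cites, shows that \emph{any} nontrivially knotted triangle, i.e. a knot on three edges, in a triangulated three-sphere already prevents constructibility, with no quantitative threshold on the knot type; complexity thresholds of the Lickorish type \cite{Lick} are only needed to obstruct shellability for knots realised on many edges. Since the three-edge realisation is literally unchanged in every $\st^d(K)$, the second obstacle you flagged dissolves and your argument closes as written.
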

Examples of such locally constructible triangulated three-spheres are given in \cite{BenZ}, Examples $2.26$, $2.27$ and $2.28$ and we refer to this paper of B. Benedetti and G. M. Ziegler
for the definition of local constructibility as well. 

\begin{proof}
By Corollary $2.11$ of  \cite{BenZ}, the locally constructible three-sphere $K$ is collapsible once deprived of any facet $\sigma$, so that there exists a sequence of subcomplexes $K_0 \subset K_1 \subset \dots \subset K_N \subset K$ such that $K_N = K \setminus \sigma$, $K_0$ is a single vertex and for every $i \in \{ 0, \dots , N-1 \}$, $K_{i+1}$ collapses on $K_i$ by some elementary collapse. Let $f$ be the discrete function which vanishes on $K_0$, takes the value $N+1$ on $\sigma$ and which for every $j \in \{ 1, \dots , N \}$, takes the value $j$ on both faces of $K_i \setminus K_{i-1}$. Then, $f$ is a discrete Morse function with only two critical faces, namely $\sigma$ and the vertex $K_0$, see \cite{For1}. Now, performing any stellar subdivision does not affect the property of being collapsible once deprived of any facet, see for instance Theorem $10.14$ of \cite{Koz2}, so that all the complexes $\st^d (K)$, $d \geq 0$, carry Morse functions with only two critical points. 
Likewise, performing stellar subdivision at any facet does not affect the property of containing a knotted triangle, that is a non-trivial knot on three edges, in the one skeleton, so that all the complexes $\st^d (K)$, $d \geq 0$, share this property. However, by Theorem $1$ of \cite{HachZ}, this prevents $\st^d (K)$ from being constructible, hence shellable, compare \cite{Frankl,VanK,Lick}. By Proposition \ref{propmuK}, this forces $\mu (\st^d (K)) >2$ as soon as $s(\st^d (K)) = 0$, so that taking the infimum over all $d \geq 0$, we get $\mu_\infty (K) >2$ and thus by Lemma \ref{lemmamu}, 
$\mu_\infty (K) \geq 4$.
\end{proof}

\begin{rem}
\label{rempl}
1) It would be of interest to bound $\mu (K)$ or $\mu_\infty (K)$ from below by some topological invariant of the knot. 

2) By the work of W. B. R. Lickorish \cite{Lick} (see also  \cite{BenZ}), there also exist non locally constructible triangulated three-spheres containing knotted triangles in their one-skeleton, so that the minimal number of critical points of their discrete Morse functions is then greater than two as well. 
\end{rem}

The spheres $K$ given by Proposition \ref{propknot} have the property that none of the subdivisions $\st^d (K)$, $d \geq 0$, are shellable by Proposition  \ref{propmuK}. This contrasts with Theorem $B$ of \cite{AdipBen2} up to which they become shellable after a large number of barycentric subdivisions. These by the way are already Morse shellable after a single barycentric subdivision by Theorem \ref{theostellar}, so that the minimal number of critical tiles used by such a Morse shelling decreases after successive barycentric subdivisions until it reaches two when the complex becomes shellable in the classical sense. The latter may happen after some arbitrary large number of barycentric subdivisions depending on the complexity of the knot by \cite{Lick}. 

\begin{defi}
\label{defm}
For every Morse shellable finite relative simplicial complex $S$, let $m' (S)$ (resp. $m(S)$) be the infimum over all Morse tiling (resp. Morse shelling) $\tau$ on $S$ of the number of critical tiles used by $\tau$. Likewise, for every finite relative simplicial complex $S$, let $m' _\infty(S)$ (resp. $m_\infty (S)$) be the infimum of $m' (\Sd^d (S))$ (resp. $m (\Sd^d (S))$) over all $d \geq 1$. 
\end{defi}
\begin{lemma}
\label{lemmam}
For every finite relative simplicial complex $S$, $m_\infty (S) = \chi (S) \mod(2)$. Likewise, if $S$ is Morse tileable (resp. Morse shellable), $m' (S) = \chi (S) \mod(2)$ (resp. $m (S) = \chi (S) \mod(2)$ ).
\end{lemma}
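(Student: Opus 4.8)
The plan is to prove Lemma \ref{lemmam} by the same mechanism that established Lemma \ref{lemmamu}, namely by reducing everything to the Euler characteristic identity $\chi(S) = \sum_{k} (-1)^k c_k(\tau)$ valid for \emph{any} Morse tiling $\tau$, which is Lemma $2.5$ of \cite{SaWel2}. The only structural difference between the two lemmas is that we now range over Morse tilings and Morse shellings (using Morse tiles from Definition \ref{defMorsetiles}) rather than over $h$-tilings, and over iterated barycentric subdivisions $\Sd^d$ rather than iterated stellar subdivisions $\st^d$. Since the cited Euler characteristic formula already holds for arbitrary Morse tilings and not merely $h$-tilings, the core computation transfers verbatim.

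First I would recall that for every Morse tiling $\tau$ on $S$ one has $\chi(S) = \sum_{k=0}^{+\infty}(-1)^k c_k(\tau)$, so that reducing modulo two kills the signs and shows that $\chi(S)$ is congruent to the total number $\sum_k c_k(\tau)$ of critical tiles used by $\tau$. Hence every Morse tiling, and in particular every Morse shelling, uses at least one critical tile when $\chi(S)$ is odd, and an even number of critical tiles when $\chi(S)$ is even; this already gives the lower bounds $m'(S) \geq \chi(S) \bmod 2$ and $m(S) \geq \chi(S) \bmod 2$ whenever the relevant infima are taken over a non-empty set. Since the Euler characteristic is invariant under barycentric subdivision, the same congruence holds for each $\Sd^d(S)$, yielding the lower bound $m'_\infty(S), m_\infty(S) \geq \chi(S) \bmod 2$ as well.

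Next I would supply the matching upper bound, which is where the force of the earlier existence results enters. For the asymptotic statement, Theorem \ref{theostellar} guarantees that $S$ becomes Morse shellable after a single barycentric subdivision, and Proposition \ref{propbaryc} together with the existence constructions shows one can realize Morse shellings whose critical count meets the parity constraint; combined with the ability to merge critical tiles of consecutive indices under further subdivision as in Proposition \ref{propMorse}, one drives the number of critical tiles down to exactly $\chi(S) \bmod 2$, giving $m_\infty(S) = \chi(S) \bmod 2$. For the finite statements, when $S$ is already Morse tileable (resp. Morse shellable) the same parity-minimizing construction applied without further subdivision yields a Morse tiling (resp. Morse shelling) achieving $\chi(S)\bmod 2$ critical tiles, so that $m'(S)=\chi(S)\bmod 2$ (resp. $m(S)=\chi(S)\bmod 2$).

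I expect the main obstacle to be the upper bound rather than the lower bound: proving the congruence is a one-line consequence of the Euler characteristic formula, but exhibiting a Morse shelling that actually attains the parity-minimal number of critical tiles requires care. In the even case one must produce a Morse tiling with no critical tiles at all (or cancel them in pairs), and in the odd case a Morse tiling with a single critical tile, and one must verify that the shellability constraint of Definition \ref{deftiling} can be maintained throughout the cancellation process. The cleanest route is to lean on the constructions already recorded in the proofs of Propositions \ref{propbaryc} and \ref{propMorse}, which assemble Morse shellings tile by tile and control the indices of the critical tiles produced, and to invoke the pairwise-cancellation phenomenon of consecutive-index critical tiles to reduce modulo two; the remaining verification that no obstruction survives beyond the parity is then routine.
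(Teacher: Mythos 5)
You have misread the statement: in this paper the notation $m_\infty(S) = \chi(S) \mod(2)$ asserts a \emph{congruence}, $m_\infty(S) \equiv \chi(S) \pmod{2}$, not that $m_\infty(S)$ equals the residue $\chi(S)\bmod 2 \in \{0,1\}$. The paper makes this unambiguous: immediately after the lemma it records that $m_\infty(K)=2$ for every piecewise-linear triangulated sphere (Theorem B of \cite{AdipBen2}), which would contradict the equality reading since $\chi(K)\bmod 2 = 0$; and the companion Lemma~\ref{lemmamu} is used in Proposition~\ref{propknot} precisely to upgrade $\mu_\infty(K)>2$ to $\mu_\infty(K)\geq 4$, a deduction that only makes sense for a parity statement. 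Under the correct reading, your first two paragraphs already constitute the entire proof, and they coincide with the paper's: by Lemma $2.5$ of \cite{SaWel2}, $\chi(S)=\sum_k(-1)^k c_k(\tau)$ for \emph{any} Morse tiling $\tau$, so every Morse tiling or Morse shelling uses a number of critical tiles congruent to $\chi(S)$ modulo $2$; since $\chi$ is invariant under subdivision and the relevant sets are non-empty (by hypothesis for $m'$ and $m$, by Theorem~\ref{theostellar} for $m_\infty$), each infimum is a minimum of a set of integers all of the same parity as $\chi(S)$, hence itself congruent to $\chi(S)$ modulo $2$. The paper indeed dispatches Lemma~\ref{lemmam} with the one line that it ``goes along the same lines'' as Lemma~\ref{lemmamu}.

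Your third and fourth paragraphs, however, set out to prove the false strengthening $m_\infty(S)\leq \chi(S)\bmod 2$, and the argument there would fail. For a closed pseudo-manifold of positive dimension, every Morse shelling must begin with a closed simplex (the first tile is closed in $\vert K\vert$) and end with an open one (every ridge lies in two facets), as in the proof of Proposition~\ref{propmuK}; since barycentric subdivisions of such a complex are again closed pseudo-manifolds, $m(\Sd^d(K))\geq 2$ for all $d$, so no construction can reach $0$ or $1$ critical tiles. Moreover, the ``pairwise-cancellation phenomenon of consecutive-index critical tiles'' you invoke has no counterpart anywhere in the paper: Theorem~\ref{theotiling} and Corollary~\ref{cortilingintro} show that the subdivision procedures \emph{preserve} the number and indices of critical tiles, and Proposition~\ref{propMorse} \emph{produces} $2^{l+1-k}$ critical tiles rather than removing any; minimizing critical tiles is in fact an open difficulty the paper highlights (e.g.\ $\mu_\infty(K)\geq 4$ for knotted spheres). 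Delete the upper-bound paragraphs and reinterpret the conclusion as a congruence, and your proof is correct and identical to the paper's.
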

\begin{proof}
The proof goes along the same lines as the proof of Lemma \ref{lemmamu}.
\end{proof}

By Theorem $B$ of \cite{AdipBen2}, $m_\infty (K)=2$ for any piecewise-linear triangulated sphere $K$. If $K$ is not piecewise-linear, then Theorem \ref{theostellar} still applies and provide a Morse shelling after a single barycentric subdivision and shellable $h$-tiling after finitely many stellar subdivisions at facets or ridges, but no subdivision can be shellable since they remain not piecewise-linear. Such a non-PL sphere may carry some discrete Morse function having only two critical faces by Corollary $2.38$ of \cite{Ben}, what about $m_\infty (K)$?
In general, for every finite relative simplicial complex $S$, how does $m_\infty (S)$ compare with the minimal number of critical points of a discrete Morse function on $\Sd^d (S)$, $d \gg 0$? The former bounds from above the latter, but can these numbers be equal  in general as in the case of piecewise-linear spheres? 
One cannot replace Morse shellings by shelled $h$-tilings, as Example \ref{exsK} shows. 

\begin{prop}
\label{propm}
Let $K$ be the union of two simplices sharing a common vertex. Then, $m(K) =  m_\infty (K) = 1$, while for every $d>0$, $s(\Sd^d (K))=0$ and  $\mu (\Sd^d (K)) =3$. In fact, every shelled $h$-tiling on $\Sd^d (K)$ contains two closed simplices.
\end{prop}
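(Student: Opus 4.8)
The plan is to analyze $K$, the union of two $n$-simplices $\sigma$ and $\sigma'$ sharing a single common vertex $v$, by treating its two pieces separately and then assembling. First I would prove that $m(K)=1$ directly, without subdivision. The idea is to build a Morse shelling of $K$ whose first tile is a closed simplex and whose remaining tiles form a Morse tiling of $K$ minus that simplex. Concretely, take $\sigma$ as a closed simplex (a basic tile of order $0$, hence not critical in the relative complex $K\setminus\{\emptyset\}$, but it contributes the unique order-zero piece), then tile $\sigma'\setminus\{v\}$. Since $\sigma'\setminus\{v\}$ is a relative simplex deprived of a single vertex, which is a codimension-$(n-1)$ face, it is a Morse tile of order $0$ with an $(n-1)$-codimensional (i.e. $0$-dimensional) Morse face; I would exhibit it as a single regular Morse tile, giving exactly one critical tile for the whole shelling once we account for the closed-simplex boundary structure. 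By Lemma \ref{lemmam}, $\chi(K)=1$ forces $m(K)\equiv 1 \pmod 2$, so $m(K)=1$ is the minimum, and $m_\infty(K)=1$ follows the same way after checking that one barycentric subdivision preserves Morse tileability via Proposition \ref{propbaryc}.

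Next I would handle the $h$-tiling statements for $\Sd^d(K)$. That $s(\Sd^d(K))=0$ for $d>0$ follows from Theorem \ref{theostellar} applied to each barycentrically subdivided simplex, together with the observation that after barycentric subdivision the common vertex $v$ is no longer a problematic singular point in the sense that each of the two subdivided simplices is itself shellably $h$-tileable; concatenating the two shellings across the shared vertex yields a shellable $h$-tiling. The heart of the proposition is the claim $\mu(\Sd^d(K))=3$, and specifically that every shelled $h$-tiling of $\Sd^d(K)$ uses exactly two closed simplices. The upper bound $\mu\le 3$ I would get by exhibiting an explicit shelled $h$-tiling: shell the subdivision of $\sigma$ to produce one critical tile of index $0$ (a closed simplex) and one of index $n$ (an open simplex), then shell the subdivision of $\sigma'$ but, because it must attach along the single vertex $v$ which is already present, its shelling cannot begin by absorbing $v$ into an existing closed interval, forcing a second closed simplex (index $0$) somewhere. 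This gives three critical tiles: two of index $0$ and one of index $n$. Then by Lemma \ref{lemmamu}, $\chi(K)=1$ forces $\mu\equiv 1\pmod 2$, ruling out $\mu=2$ and pinning $\mu=3$ once I show $\mu\neq 1$.

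The main obstacle, and the genuinely structural step, is proving that \emph{every} shelled $h$-tiling on $\Sd^d(K)$ contains two closed simplices, i.e. two critical tiles of index $0$. The plan here is a local connectivity argument at the shared vertex $v$. In any $h$-tiling, the critical tiles of index $0$ are precisely the closed simplices, and each one corresponds to a closed interval $[\emptyset,\overline{T}]$ in the induced partition of the face poset $\Sd^d(K)\setminus\{\emptyset\}$; in particular the empty face lies below exactly the closed simplices. I would argue that $\vert\Sd^d(K)\vert$ is the wedge of two balls joined at the single point $v$, so it is \emph{not} strongly connected through ridges: the two halves meet only in a single vertex, which has codimension $n\ge 1$ (in fact codimension $n$). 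Because a shelling requires each partial union $S_p$ to be closed, and closedness propagates only across faces of codimension one within each half, the shelling can never connect the two halves through a ridge. I would make this precise by showing that within each half, the first tile introduced must be a closed simplex (the minimal element in that half's sub-partition must cover $\emptyset$), and that the two halves cannot share a single closed simplex because $v$ lies in no common facet of positive dimension. Quantifying ``exactly two, never more'' then requires showing each half contributes exactly one closed simplex, which follows because each half is a subdivided simplex and hence individually shellable with a single order-zero tile by Theorem \ref{theostellar} and Proposition \ref{propmuK}; the lower bound of two total is the content that cannot be evaded. This combinatorial separation at the cut vertex is where I expect the real work to lie, and I would phrase it in terms of the face poset partition rather than the geometric realization to keep the argument clean.
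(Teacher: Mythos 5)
Your lower-bound idea is essentially the paper's: every tile of an $h$-tiling has a facet of $\Sd^d(K)$ as its closure, hence lies entirely in one of the two subdivided simplices, and the first tile of the shelling to appear in each half must contain all of its ridges (its missing faces would otherwise have to lie in the other half, which meets it only in the vertex $v$, of dimension $0 < n-1$), so it is a closed simplex. But your upper bound $\mu(\Sd^d(K)) \leq 3$ is genuinely broken. By Lemma $2.5$ of \cite{SaWel2}, recalled in the proof of Lemma \ref{lemmamu}, any $h$-tiling satisfies $\chi = \sum_k (-1)^k c_k$. Your proposed tiling of the first half $\Sd^d(\sigma)$ by one critical tile of index $0$ and one of index $n$ would force $\chi = 1 + (-1)^n$ for a ball of Euler characteristic $1$, so it exists in no dimension (indeed a shelling of a subdivided ball never produces an open simplex, since the boundary ridges must be covered); and globally your critical vector gives $2 + (-1)^n \neq 1 = \chi(K)$ whenever $n$ is even --- including the paper's own example of two triangles, where Figure \ref{figstellar} shows indices $0,0,1$, not $0,0,2$. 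The step you never supply is precisely how to $h$-tile the second half as the \emph{relative} complex $\Sd^d(\sigma') \setminus \{v\}$: the paper obtains it from the third case of Theorem \ref{theotiling} applied to the regular Morse tile $\sigma' \setminus \{v\}$ (order $0$, Morse face the vertex $v$), which produces two critical tiles of consecutive indices $\dim(\mu)=0$ and $1$; the global critical vector is thus $(2,1,0,\dots)$ in every dimension, consistent with $\chi = 1$, and it propagates to all $\Sd^d(K)$ by the first case of Theorem \ref{theotiling}. Without this, the parity argument alone cannot pin $\mu = 3$.

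Two further points. First, you misread the last assertion: it claims every shelled $h$-tiling contains \emph{at least} two closed simplices, not exactly two (a tiling may use more), and your attempted exactness argument is invalid anyway --- Proposition \ref{propmuK} applies to closed pseudo-manifolds, not to balls, and the existence of a shelling of each half with one order-zero tile constrains nothing about an arbitrary tiling. Second, for $m_\infty(K) = 1$, Proposition \ref{propbaryc} does not control the number of critical tiles of the Morse tiling it produces; the paper instead uses Theorem \ref{theotiling}: since the Morse face of $\sigma' \setminus \{v\}$ is a vertex, it contains no positive-dimensional face $\tau$, so every stellar subdivision (hence the barycentric one, being a composition of such) inherits a Morse shelling with a unique tile isomorphic to the original and still a single critical tile, and Lemma \ref{lemmam} rules out zero by parity. (Note also the tacit hypothesis $n \geq 2$ throughout: for two segments sharing a vertex, $K$ is a shellable path with $\mu = 1$, and the ridge argument at $v$ fails since ridges are then vertices; your parenthetical ``codimension $n-1$'' for the vertex $v$ should moreover read codimension $n$.)
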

\begin{proof}
The simplicial complex $K$ carries a Morse shelling using a critical tile of vanishing index together with a regular Morse tile of order zero, so that by Definition \ref{defm} and Theorem \ref{theotiling}, $m(K) =  m_\infty (K) = 1$. Also, by Theorem \ref{theotiling}, $\st (K)$ carries a shelled $h$-tiling using two critical tiles of vanishing index and one critical tile of index one, so that after stellar subdivisions at all the remaining faces of $K$, we deduce that $\Sd (K)$ carries such a shelled $h$-tiling as well. Then, by Theorem \ref{theotiling}, all the barycentric subdivisions $\Sd^d (K)$, $d \geq 1$, carry a shelled $h$-tiling using two critical tiles of vanishing index and one critical tile of index one, the remaining tiles being regular. Conversely, for every $d \geq 1$ and every shelled $h$-tiling $\tau$ on $\Sd^d (K)$, denoting by $\sigma_1$ and $\sigma_2$ the two facets of $K$, we observe that the first tile of $\tau$, with respect to the shelling order, which belongs to $\Sd^d (\sigma_i)$, $i \in \{ 1,2 \}$, has to be a closed simplex, since it has to contain all its ridges. Therefore, $\tau$ contains at least two critical tiles of index zero and thus at least one critical tile of index one since $\Sd^d (K)$ is collapsible, so that $\mu (\Sd^d (K)) =3$. Hence the result. 
\end{proof}

\section{Critical and $h$-vectors}
\label{sechvectors} 

Let us recall that the {\it $h$-vector} of a Morse tiling $\tau$ on an $n$-dimensional relative simplicial complex is the vector $h(\tau)=(h_0(\tau), \dots , h_{n+1}(\tau))$ whose $j$-th entry is the number of tiles of order $j$ used by $\tau$, $j \in \{ 0 , \dots , n+1 \}$. Likewise, its {\it critical} or {\it $c$-vector} is the vector $c(\tau)=(c_0(\tau), \dots , c_{n}(\tau))$ whose $j$-th entry is the number of critical tiles of index $j$ used by $\tau$, $j \in \{ 0 , \dots , n \}$. A critical tile of index $j<n$ is a special tile of order $j$ also counted by $h_j (\tau)$, so that $c_j (\tau) \leq h_j (\tau)$ in this case, while $c_n (\tau) = h_{n+1} (\tau)$. Let us also recall the face numbers of basic tiles.

\begin{lemma}[Proposition $4.3$ of  \cite{SaWel1}]
\label{lemmaface}
For every $n>0$ and every  $k\in \{0,\ldots, n+1\}$, 
$$f_j(T_k^n)=\begin{cases} 
0 & \mbox{if } 0\leq j<k-1,\\
\binom{n+1-k}{n-j}& \mbox{if } k-1\leq j\leq n.
\end{cases}$$ 
\end{lemma}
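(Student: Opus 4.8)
The plan is to compute the face numbers $f_j(T^n_k)$ by relating them to the face numbers of a closed $n$-simplex and subtracting off the faces that were removed. Recall that $T^n_k$ is an $n$-simplex $\sigma$ deprived of $k$ of its ridges, say $\sigma_0, \dots, \sigma_{k-1}$, so by Definition \ref{defrelativesimplex} the $j$-dimensional faces of $T^n_k$ are exactly those $j$-faces of $\sigma$ that are not contained in $\sigma_0 \cup \dots \cup \sigma_{k-1}$. Thus I would count the $j$-faces of $\sigma$ lying in the removed ridges via inclusion–exclusion and subtract from $\binom{n+1}{j+1}$, the total number of $j$-faces of $\sigma$.

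The key combinatorial observation is that a choice of $k$ ridges $\sigma_0,\dots,\sigma_{k-1}$ of $\sigma$ corresponds to a choice of $k$ vertices $v_0, \dots, v_{k-1}$ of $\sigma$, where $\sigma_i$ is the ridge opposite $v_i$. A $j$-face $F$ of $\sigma$ is contained in some $\sigma_i$ precisely when $F$ omits the vertex $v_i$, i.e.\ when $F$ avoids at least one of these $k$ marked vertices. Equivalently, $F$ \emph{survives} in $T^n_k$ iff $F$ contains all $k$ marked vertices $v_0, \dots, v_{k-1}$. So I would count directly: a surviving $j$-face is obtained by taking all $k$ marked vertices together with a further $(j+1-k)$ vertices chosen among the remaining $n+1-k$ unmarked ones. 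This immediately gives $f_j(T^n_k) = \binom{n+1-k}{j+1-k} = \binom{n+1-k}{n-j}$ whenever $j+1-k \geq 0$, i.e.\ $j \geq k-1$, and $f_j(T^n_k)=0$ when $j < k-1$ since there are not enough vertices to contain all $k$ marked ones. This reproduces exactly the stated formula.

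I would therefore structure the argument in three short steps: first, identify the removed ridges with a set of $k$ marked vertices and characterize surviving faces as those containing all marked vertices; second, handle the degenerate range $0 \le j < k-1$, where no $j$-face can contain $k$ marked vertices, giving $f_j = 0$; and third, in the range $k-1 \le j \le n$, count the surviving $j$-faces as $\binom{n+1-k}{n-j}$ by the free choice of the remaining vertices. The edge cases $k=0$ (closed simplex, recovering $f_j = \binom{n+1}{j+1}$) and $k=n+1$ (open simplex, where only the top face $j=n$ survives) should be checked to confirm consistency with the convention that $\binom{n+1-k}{n-j}$ vanishes outside the valid range.

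The only mild subtlety, rather than a genuine obstacle, is verifying that the face poset described in Definition \ref{defrelativesimplex} does make ``survives iff contains all marked vertices'' the correct criterion: one must confirm that the union $\sigma_0 \cup \dots \cup \sigma_{k-1}$ as a set of faces consists of precisely those faces omitting at least one marked vertex, which follows since $F \subset \sigma_i$ iff $v_i \notin F$. Given this, the computation is purely a binomial identity, and I expect no substantive difficulty; the cleanest presentation is the direct count of surviving faces rather than an inclusion–exclusion over the removed ridges.
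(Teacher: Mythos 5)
Your proof is correct, but it takes a genuinely different route from the paper. The paper's proof is a short induction on the dimension $n$: it checks the extreme cases $k=0$ and $k=n+1$ (closed and open simplex) directly, and for $1 \leq k \leq n$ observes that $T^n_k$ is isomorphic to a cone deprived of its apex over an $(n-1)$-dimensional basic tile of the same order, so that the face numbers satisfy the Pascal-type recursion $f_j(T^n_k) = f_j(T^{n-1}_k) + f_{j-1}(T^{n-1}_k)$, which propagates the binomial formula. You instead give a direct, non-inductive count: identifying the $k$ removed ridges with the $k$ opposite ``marked'' vertices $v_0,\dots,v_{k-1}$, noting that a face $F$ of $\sigma$ satisfies $F \subset \sigma_i$ iff $v_i \notin F$, so that $F$ survives in $T^n_k$ iff it contains all marked vertices, and then choosing the remaining $j+1-k$ vertices among the $n+1-k$ unmarked ones to get $\binom{n+1-k}{j+1-k} = \binom{n+1-k}{n-j}$, with the vanishing in the range $j < k-1$ and the edge cases $k=0$, $k=n+1$ falling out automatically. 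Your characterization of surviving faces is the right one (an interior point of $|F|$ has support exactly $F$, so geometric containment in the union of the removed ridges forces combinatorial containment in a single $\sigma_i$, which you address), and your count is clean. What each approach buys: your bijective argument is self-contained, gives a uniform formula with no case split or appeal to the cone structure, and explains \emph{why} the answer is a single binomial coefficient (it also exhibits the unique $(k-1)$-dimensional face, the restriction set on the marked vertices, as a byproduct); the paper's inductive cone argument is shorter to state and meshes with the recursive join structure $T'_i = \hat{\tau} * T_i$ that the tiling proofs elsewhere in the paper rely on, which is presumably why it is phrased that way.
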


\begin{proof}
When the order $k$ of the $n$-dimensional basic tile equals $0$ or $n+1$, the tile is a closed or open simplex and one checks the result. Otherwise, the tile is isomorphic to a cone deprived of its apex over an $(n-1)$-dimensional basic tile of the same order, so that one gets the result by induction. 
\end{proof}

We are now ready to prove Theorems \ref{theohvector}, \ref{theopalind} and Corollary \ref{corhvector}.

\proof[Proof  of Theorem \ref{theohvector}]
We proceed as in the proof of Theorem $4.9$ of \cite{SaWel1} and compute the face vector of $S$ using the $h$-tiling $\tau$, see also Proposition $2.3$ of \cite{Stanbook}. The face vector of a basic tile satisfies, for every $0 \leq k \leq n+1$, $f_j (T^n_k) = 0$ if $j < k-1$, while $f_j (T^n_k) = {n+1 - k \choose n-j}$ if $k-1 \leq j \leq n$ by Lemma \ref{lemmaface}. By definition then, $f_j (C^n_k) = f_j (T^n_k)$ if $j \neq k-1$ and $f_{k-1} (C^n_k) = 0$. Let us set $\tilde{c}_j (\tau) = c_j (\tau)$ if $j < n$ and $\tilde{c}_j (\tau) = 0$ otherwise. Since $S$ is pure $n$-dimensional, all the tiles of $\tau$ have same dimension $n$ and summing over all of them we get
$$\sum_{j=0}^{n+1} f_{j-1} (S) X^j = \sum_{j=0}^{n+1} X^j \big( \sum_{k=0}^{j} {n+1 - k \choose j-k} h_k (\tau) - \tilde{c}_j (\tau) \big),$$
where we set $f_{-1} (S) = 0$ and where the term $\tilde{c}_j (\tau)$ corrects the fact that $h_j (\tau)$ counts one $(j-1)$-dimensional open face also for each critical tile of index $j<n$. We deduce as in \cite{SaWel1}. 
$$X\sum_{j=0}^{n} f_j (S) X^j + \sum_{j=0}^{n-1} c_j (\tau) X^j = \sum_{k=0}^{n+1} h_k (\tau) X^k(X+1)^{n+1-k}.$$
This identity implies that two $h$-tilings on $S$ have same $h$-vector iff they have same number of critical tiles in each index between $0$ and $n-1$. Now, if $S = \overline{S} \setminus L$ for some subcomplex $L$ of $ \overline{S} $ and if we set $\chi (S) = \chi ( \overline{S} ) - \chi (L) = \chi ( \overline{S} ,L)$, then we know by Lemma $2.5$ of \cite{SaWel2} that this Euler characterisctic can be computed as $\chi (S) = \sum_{k=0}^{n} (-1)^k c_k (\tau)$, so that the number $c_n (\tau)$ is determined by the numbers $c_0 (\tau), \dots , c_{n-1} (\tau)$. Hence the result. 
\qed \\

\proof[Proof  of Theorem \ref{theopalind}]
We proceed as in the proof of Theorem $3.2$ of \cite{Wel1}. By Theorem \ref{theohvector},
$$X\sum_{k=0}^{n} f_k (K) X^k = \sum_{k=0}^{n+1} h_k (\tau) X^k(X+1)^{n+1-k} - \sum_{k=0}^{n-1} c_k (\tau) X^k.$$
Now, by Theorem $2.1$ of \cite{Mac}, the Dehn-Sommerville relations can be expressed as the identity $R_K (-1-X) = (-1)^{n+1}  R_K (X)$, where $R_K (X) = X\sum_{k=0}^{n} f_k (K) X^k - \chi (K) X$, see also Theorem $1.1$ of \cite{SaWelDCG}. We thus deduce
$$\sum_{k=0}^{n+1} h_k (\tau) X^k(X+1)^{n+1-k} - \sum_{k=0}^{n-1} c_k (\tau) X^k - \chi (K) X $$
$$= \sum_{k=0}^{n+1} h_k (\tau) (X+1)^kX^{n+1-k} - \sum_{k=0}^{n-1} c_k (\tau) (-1)^{n+1-k} (X+1)^k + (-1)^{n+1} \chi (K) (X+1).$$
We now set $X = \frac{T}{1-T}$ and observe that  $\chi (K) = 0$ when $n$ is odd by Poincar\'e duality, see \cite{Munk}, to deduce
$$\sum_{k=0}^{n+1} h_k (\tau) T^k - \sum_{k=0}^{n-1} c_k (\tau) T^k(1-T)^{n+1-k} $$
$$= \sum_{k=0}^{n+1} h_k (\tau) T^{n+1-k} - \sum_{k=0}^{n-1} c_k (\tau) (-1)^{n+1-k} (1-T)^{n+1-k} - \chi (K) (1-T)^{n+1}.$$
The result follows, since $\chi (K) = 0$ when $n$ is odd.
\qed \\

\proof[Proof  of Corollary \ref{corhvector}]
Theorem \ref{theopalind} implies that 
$$\sum_{k=0}^{n+1} \big(h_k (\tau) -  h_{n+1-k} (\tau)\big) X^k = \sum_{k=2}^{n+1} c_{n+1-k} (\tau) (1-X)^k \big(X^{n+1-k} - (-1)^k \big) - \chi (K) (1-X)^{n+1}.$$
We compute the four first derivatives of this polynomial at $x=1$ following Leibniz' rule to get the result, taking into account that $\chi (K) = 0$ when $n$ is odd.
\qed

\begin{rem}
By the simplest Dehn-Sommerville relation, every ridge of $K$ is contained in exactly two facets.  Since $\tau$ is a partition of $\vert K \vert$, the total number of ridges of all tiles of $\tau$ coincides with the total number of missing ridges of all tiles.  This implies the first relation given by Corollary \ref{corhvector}. This fact was already used in \cite{Wel1} to prove that the $h$-vector of a Morse tiling in dimension three is palindromic iff its $c$-vector is. 
\end{rem}

\begin{ex}
1) When $n=3$, Corollary \ref{corhvector} reads $h_3 (\tau) - h_1 (\tau) = 2(h_0 (\tau) - h_4 (\tau))$.

2) When $n=4$, the first two relations given by Corollary \ref{corhvector} read 
$$\big(h_3 (\tau) - h_2 (\tau) \big) + 3 \big( h_4 (\tau) - h_1 (\tau) \big) = 5 \big(h_0 (\tau) - h_5 (\tau) \big).$$
\end{ex}

\bibliographystyle{abbrv}

Univ Lyon, Universit\'e Claude Bernard Lyon 1, CNRS UMR 5208, Institut Camille Jordan, 43 blvd. du 11 novembre 1918, F-69622 Villeurbanne cedex, France

{\tt welschinger@math.univ-lyon1.fr.}
\end{document}